\def\R{{\mathbb {R}}}
\def\N{{\mathbb {N}}}
\def\ve{\varepsilon}
\def\cd{\rightharpoonup}
\def\supp{\operatorname {\text{supp}}}
\def\dist{\operatorname {\text{dist}}}
\newtheorem{teo}{Theorem}[section]
\newtheorem{lema}[teo]{Lemma}
\newtheorem{prop}[teo]{Proposition}
\newtheorem{corol}[teo]{Corollary}
\theoremstyle{remark}
\theoremstyle{definition}
\numberwithin{equation}{section}
\begin{document}

\title[On the solvability of a minimization problem]{A minimization problem involving a  fractional \\ Hardy-Sobolev type inequality}

\author{Antonella Ritorto}

\address{Mathematical Institute, Utrecht University \newline
	Hans Freudenthalgebouw \newline
	Budapestlaan 6, 3584 CD Utrecht, Netherlands}
\email[A. Ritorto]{a.ritorto@uu.nl}
	
%35R11 Fractional partial differential equations
%49Q10  	Optimization of shapes other than minimal surfaces
%35R45 partial differential inequalities 
%49J40 variational methods including variational inequalities
\subjclass[2010]{35R11, 35R45}
	
\keywords{Fractional Hardy-Sobolev type inequality, Minimization problem}

\begin{abstract} In this work, we obtain an existence of nontrivial solutions to a minimization problem involving a fractional Hardy-Sobolev type inequality in the case of inner singularity. Precisely, for $\lambda>0$ we analyze the attainability of the optimal constant
$$
\mu_{\alpha, \lambda}(\Omega):=\inf\left\{ [u]^2_{s,\Omega}+\lambda\int_{\Omega}|u|^2 \, dx \colon u\in H^s(\Omega), \,  \int_{\Omega} \frac{|u(x)|^{2_{s,\alpha}}}{|x|^{\alpha}} \, dx=1  \right\},
$$	
where $0<s<1, n>4s, 0<\alpha<2s$, $2_{s,\alpha}=\frac{2(n-\alpha)}{n-2s}$, and $\Omega \subset \R^n$ be a bounded domain such that $0\in \Omega$. 
\end{abstract}
	
\maketitle

\section{Introduction}

%\subsection{Overview}

Let $0<s<1$, $n>4s$, $0<\alpha<2s$,  and $\Omega \subset \R^n$ be a bounded domain such that $0\in \Omega$. We introduce the fractional Sobolev space, see for instance \cite{DiNezza-Palatucci-Valdinoci}, 
\begin{equation}\label{Hs}
H^s(\Omega):=\left\{ u\in L^2(\Omega) \colon \frac{|u(x)-u(y)|}{|x-y|^{\frac{n}{2}+s}} \in L^2(\Omega\times \Omega) \right\},
\end{equation}
endowed with the norm
\begin{equation}\label{normaHs}
\|u\|_{s,\Omega}:=\left( \int_{\Omega}|u|^2\, dx + \int_{\Omega\times \Omega} \frac{|u(x)-u(y)|^2}{|x-y|^{n+2s}}\, dxdy\right)^{\frac{1}{2}}.
\end{equation}
Let $\lambda>0$ and $2_{s,\alpha}= \frac{2(n-\alpha)}{n-2s}$. This paper concerns in analyzing the attainability of the optimal constant $C>0$ for the following fractional Hardy-Sobolev inequality
$$
C\left(\int_{\Omega} \frac{|u(x)|^{2_{s,\alpha}}}{|x|^{\alpha}} \, dx \right)^{\frac{2}{2_{s,\alpha}}} \le \int_{\Omega \times \Omega} \frac{|u(x)-u(y)|^2}{|x-y|^{n+2s}} \, dxdy + \lambda \int_{\Omega} |u(x)|^2 \, dx
$$  
for every $u \in H^s(\Omega)$. For the related Dirichlet problem see the recent work \cite{Ghoussoub-Robert-Shakerian-Zhao}. 

%%%%%%%%%%%%%%%%%%%%%%%%%%%%%
%%%%%%%%%%%%%%%%%%%%%%%%%%%%%
%%%%%% nonlocal references 
%%%%%%%%%%%%%%%%%%%%%%%%%%%%%
%%%%%%%%%%%%%%%%%%%%%%%%%%%%%

%Let $0<s<1$, $0<\alpha<2s$ and $n>2s$. 
In \cite{Marano-Mosconi}, S. A. Marano and S. Mosconi prove the existence of an extremal function $u_0$, solution to % u\in H^s(\R^n), \quad
\begin{equation}\label{global-mu}
\mu_{\alpha}:= \inf\left\{ [u]_s^2 \colon u \text{ measurable, vanishing at infinity}, \quad  \int_{\R^N}\frac{|u(x)|^{2_{s,\alpha}}}{|x|^{\alpha}} \, dx=1 \right\}.
\end{equation}
where $2_{s,\alpha}=\frac{2(n-\alpha)}{n-2s}$ and 
$$
[u]_s^2 = \int_{\R^n \times \R^n} \frac{|u(x)-u(y)|^2}{|x-y|^{n+2s}} \, dxdy.
$$
See also \cite{Yang}. Here, $u$ vanishes at infinity means $\left| \{ |u|> a \} \right| < \infty \text{ for every } a\in \R$. Observe that  $2_{s,2s}=2$ and $2_{s,0}=2_s^*=\frac{2n}{n-2s}$, the latter is related to the non compact but continuous embedding $H^s(\R^n)\hookrightarrow L^{2_s^*}(\R^n)$. The constant $\mu_{2s}$  was calculated by I. Herbst \cite{Herbst1977}. %In \cite{Marano-Mosconi},
%$$
%\mu_{2s}=\frac{2 \pi^{\frac{n}{2}} \Gamma\left(\frac{n+2s}{4}\right)^2 \Gamma\left( \frac{n+2s}{2} \right)}{\Gamma\left(\frac{n-2s}{4} \right)^2 |\Gamma(-s)|},
%$$
%the authors consider not only $p=2$, but also $p>1$. %See \cite{Herbst}, for $p=2$. 
In \cite{Marano-Mosconi}, for $p>1$, the existence of extremal functions $u \in W^{s,p}(\R^n)$ for the Hardy-Sobolev inequality is established through concentration-compactness. The authors also show the asymptotic behavior of extremal functions: $u(x)\sim |x|^{-\frac{n-ps}{p-1}},$ as $|x|\to \infty$, and the summability information $u\in W^{s,\gamma}(\R^n)$, for every $\frac{n(p-1)}{n-s}<\gamma<p$. Such properties turn out to be optimal when $s\to1^-$, in which case optimizers are explicitly known. See for instance \cite{DiNezza-Palatucci-Valdinoci} for the definitions of $W^{s,p}(\R^n)$ and $W^{s,\gamma}(\R^n)$. 

In \cite{Frank-Seiringer}, the sharp constant in the Hardy inequality for fractional Sobolev spaces is calculated by using a non-linear and non-local version of the ground state representation. %Also, from the sharp Hardy inequality they deduce the sharp constant in a Sobolev embedding which is optimal in the Lorentz scale. 
% of the form
%$$
%\kappa_{n,p,s}\int_{\R^n_+}\frac{|u(x)|^p}{x_n^{ps}}\, dx \le \frac{1}{2} \int_{\R^n_+\times\R^n_+} \frac{|u(x)-u(y)|^p}{|x-y|^{n+ps}}\, dxdy,
%$$
%$1\le p <\infty$. 
%Other related work is \cite{Bogdan-Dyda}, where the same outcome is done in the case $p=2$ and by replacing $2s$ by $0<\gamma<2$. 

For unbounded domains, different from $\R^n$, in \cite{Dyda-Lehrback-Vahakangas}, it was proved a variant of the fractional Hardy-Sobolev-Maz'ya inequality for half spaces, applying a new version of the fractional Hardy-Sobolev inequality general unbounded John domains. R. Frank and R. Seiringer give an expression for the best constant %involved in the fractional Hardy-Sobolev inequality 
in the half space \cite{Frank-Seiringer-2}. See also  \cite{Bogdan-Dyda}.
%%%%%%%%%%%%%%%%%%%%
%%%%%%%%%%%%%%%%%%%%
%%%%
%%%% fractional Hardy in bounded domains references
%%%%
%%%%%%%%%%%%%%%%%%%%
%%%%%%%%%%%%%%%%%%%%
Concerning bounded domains, see \cite{Dyda, Loss-Sloane}. In \cite{Edmunds-HurriSyrjanen-Vahakangas}, the authors 
%study fractional Hardy-Sobolev type inequalities where the 
consider domains with uniformly fat complement. 

In the local setting, in \cite{Ghoussoub-Kang}, the authors show that the value and the attainability of the best Hardy-Sobolev constant on a smooth domain $\Omega \subset \R^n$
$$
\nu_{\alpha}(\Omega):= \left\{ \int_{\Omega}|\nabla u|^2\, dx \colon u \in H_0^1(\Omega), \, \int_{\Omega}\frac{|u(x)|^{2_{\alpha}}}{|x|^{\alpha}}\, dx =1  \right\}
$$
are closely related to the properties of the curvature of $\partial \Omega$ at $0$, where $2_{\alpha}=\frac{2(n-\alpha)}{n-2}$, $n\ge 3, 0<\alpha<2$, when $0\in \partial \Omega$. For the non-singular context either $\alpha=0$ or $0$ belonging in the interior of the domain $\Omega$, it is well-known that $\nu_{\alpha}(\Omega)=\nu_{0}(\R^n)$ for any domain $\Omega$. %See also \cite{Druet1998} for existence of an optimal Sobolev inequalities. 

In \cite{Hashizume}, a minimization problem involving a Hardy-Sobolev type inequality is solved, where the author analyzes both inner and boundary singularity, that is, zero belongs in the interior of the bounded domain, or  zero belongs to its boundary. For further references in the local setting, see \cite{Caffarelli-Kohn-Nirenberg, Catrina-Wang} and the expository paper \cite{Ghoussoub-Robert}.

\medskip

Our goal is analyzing the existence of solution to a minimization problem involving a fractional Hardy-Sobolev type inequality, and a positive parameter $\lambda>0$, with the inner singularity. To be precise, we first set the notation. 

From now on, we fix $0<s<1$, $n>4s$, $0<\alpha<2s$,  and $\Omega \subset \R^n$ be a bounded domain such that $0\in \Omega$. We consider the fractional Sobolev space $H^s(\Omega)$ as in \eqref{Hs}, endowed with the norm $\|u\|_{s,\Omega}$ \eqref{normaHs}, see for instance \cite{DiNezza-Palatucci-Valdinoci} for general properties. Denote 
$$
[u]_{s,\Omega}:= \left(\int_{\Omega\times \Omega} \frac{|u(x)-u(y)|^2}{|x-y|^{n+2s}}\, dxdy\right)^{\frac{1}{2}}, \quad \text{ and } \quad  \|u\|_{s,\alpha,\Omega}:= \left( \int_{\Omega} \frac{|u(x)|^{2_{s,\alpha}}}{|x|^{\alpha}} \, dx\right)^{\frac{1}{2_{s,\alpha}}}.
$$
When $\Omega=\R^n$, the notation becomes $[u]_s,\|u\|_{s,\alpha}$ respectively. We denote $\dot{H}^s(\R^n)$ the space of measurable functions $u\colon \R^n \to \R$ such that $[u]_s$ is finite.  Let $\lambda>0$ and $2_{s,\alpha}= \frac{2(n-\alpha)}{n-2s}$. Consider the following problem 
\begin{equation}\label{defi-mu-alpha-lambda}
\mu_{\alpha, \lambda}(\Omega):=\inf\left\{ [u]_{s,\Omega}^2+\lambda\int_{\Omega}|u|^2 \, dx \colon u\in H^s(\Omega), \,  \int_{\Omega} \frac{|u(x)|^{2_{s,\alpha}}}{|x|^{\alpha}} \, dx=1  \right\}
\end{equation}
We obtain the following existence results for minimizers of  \eqref{defi-mu-alpha-lambda}. %, provided $\lambda >0$ such that $\mu_{\alpha, \lambda}(\Omega) <\mu_{\alpha}$. 
%Roughly speaking, 
\begin{teo}
	\label{main-theo}
Let $\lambda>0$, $0<s<1$, $n>4s$, $0<\alpha<2s$, $2_{s,\alpha}= \frac{2(n-\alpha)}{n-2s}$, and $\Omega \subset \R^n$ be a bounded domain with $0\in \Omega$. Then, there exists $\lambda_* \in (0, \infty]$ such that the constant  $\mu_{\alpha,\lambda}(\Omega)$ is attained for every $0<\lambda <\lambda_*$. Moreover, if $\lambda_*<\infty$, $\mu_{\alpha,\lambda}(\Omega)$ is not attained for every $\lambda>\lambda_*$.
\end{teo}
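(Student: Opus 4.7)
The plan is to identify a threshold $\lambda_*$ through the relationship between the local infimum $f(\lambda):=\mu_{\alpha,\lambda}(\Omega)$ and the global Hardy--Sobolev constant $\mu_\alpha$ from \eqref{global-mu}. First I would record that, being the infimum of affine functions of $\lambda$, the map $f\colon(0,\infty)\to[0,\infty)$ is non-decreasing and concave, hence continuous. A universal upper bound $f(\lambda)\le\mu_\alpha$ should follow by plugging into \eqref{defi-mu-alpha-lambda} cut-offs $\eta\, u_{0,\varepsilon}$ of the rescaled global minimizer $u_{0,\varepsilon}(x)=\varepsilon^{-(n-2s)/2}u_0(x/\varepsilon)$, with $u_0$ furnished by \cite{Marano-Mosconi}, supported in a ball around $0\in\Omega$: as $\varepsilon\to 0^+$ the Gagliardo energy converges to $\mu_\alpha$, while the hypothesis $n>4s$ ensures that the $L^2$-mass of the bubble tends to zero.

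The core of the argument is the following compactness dichotomy: if $f(\lambda)<\mu_\alpha$ then the infimum is attained. I would fix a minimizing sequence $\{u_n\}\subset H^s(\Omega)$ with $\int_\Omega |u_n|^{2_{s,\alpha}}|x|^{-\alpha}\,dx=1$; since $\lambda>0$, it is bounded in $H^s(\Omega)$, so up to a subsequence $u_n\rightharpoonup u$ in $H^s(\Omega)$, $u_n\to u$ in $L^2(\Omega)$ by the fractional Rellich embedding, and $u_n\to u$ a.e. The only possible loss of compactness for the weighted $L^{2_{s,\alpha}}$ norm is concentration at the interior singularity $0$. Setting $A:=\int_\Omega |u|^{2_{s,\alpha}}|x|^{-\alpha}\,dx\in[0,1]$, a Brezis--Lieb decomposition for both the Gagliardo seminorm (whose cross term vanishes by weak convergence) and the weighted norm, combined with a cut-off estimate that extends the concentrating part of $u_n-u$ to $\R^n$ and invokes \eqref{global-mu}, should yield
\begin{equation*}
f(\lambda)\;\ge\;[u]_{s,\Omega}^2+\lambda\|u\|_{L^2(\Omega)}^2+\mu_\alpha(1-A)^{2/2_{s,\alpha}}\;\ge\;f(\lambda)\,A^{2/2_{s,\alpha}}+\mu_\alpha(1-A)^{2/2_{s,\alpha}},
\end{equation*}
the last inequality coming from using $u/A^{1/2_{s,\alpha}}$ as a test function when $A>0$. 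Since $\alpha<2s$ gives $2_{s,\alpha}>2$, the function $t^q+(1-t)^q$ with $q=2/2_{s,\alpha}\in(0,1)$ is strictly greater than $1$ on $(0,1)$, so for any $A\in(0,1)$ the displayed inequality would force $f(\lambda)>f(\lambda)$, a contradiction. Hence $A\in\{0,1\}$. The case $A=0$ (total concentration at $0$) forces $\liminf[u_n]_{s,\Omega}^2\ge\mu_\alpha$ via the cut-off estimate, again contradicting $f(\lambda)<\mu_\alpha$. Therefore $A=1$, $u$ is admissible, and by weak lower semicontinuity it is a minimizer.

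With the compactness lemma in hand, set $\lambda_*:=\sup\{\lambda>0\colon f(\lambda)<\mu_\alpha\}\in(0,\infty]$. Positivity of $\lambda_*$ follows by testing with the constant $u\equiv |\Omega|_\alpha^{-1/2_{s,\alpha}}$, where $|\Omega|_\alpha:=\int_\Omega|x|^{-\alpha}\,dx<\infty$ since $\alpha<2s<n$ and $\Omega$ is bounded: this gives $f(\lambda)\le \lambda|\Omega|\,|\Omega|_\alpha^{-2/2_{s,\alpha}}$, which is below $\mu_\alpha$ for $\lambda$ sufficiently small. Monotonicity and continuity of $f$ then imply $f(\lambda)<\mu_\alpha$ for every $\lambda<\lambda_*$, so the compactness lemma produces a minimizer. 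When $\lambda_*<\infty$ and $\lambda>\lambda_*$, combining $f\le\mu_\alpha$ with the definition of $\lambda_*$ forces $f(\lambda)=\mu_\alpha$; if a minimizer $u$ existed, then $u\not\equiv 0$ would give $[u]_{s,\Omega}^2=\mu_\alpha-\lambda\|u\|_{L^2(\Omega)}^2<\mu_\alpha$, and using $u$ as a test function for $f(\lambda')$ with any $\lambda'\in(\lambda_*,\lambda)$ would yield $f(\lambda')\le\mu_\alpha-(\lambda-\lambda')\|u\|_{L^2(\Omega)}^2<\mu_\alpha$, contradicting the definition of $\lambda_*$.

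The central difficulty is the concentration-compactness estimate in the second paragraph: one has to justify the Brezis--Lieb decomposition of the non-local seminorm $[\cdot]_{s,\Omega}^2$ and, more delicately, a cut-off comparison relating this local seminorm with the full seminorm on $\R^n\times\R^n$ entering \eqref{global-mu} for concentrating sequences. Precisely here the assumption $0\in\Omega$ is essential, since cut-offs can then be supported strictly inside $\Omega$, avoiding boundary interactions; and the assumption $n>4s$ makes the $L^2$-part of concentrating bubbles negligible compared to the Gagliardo part, which is what closes the argument.
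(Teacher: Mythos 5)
Your overall strategy matches the paper's: show $\mu_{\alpha,\lambda}(\Omega)\le\mu_\alpha$ via cut-off bubbles (with $n>4s$ killing the $L^2$-mass), reduce attainment to the strict inequality $\mu_{\alpha,\lambda}(\Omega)<\mu_\alpha$, run a Brezis--Lieb/strict-subadditivity argument, define $\lambda_*$ as the threshold where $\mu_{\alpha,\lambda}(\Omega)$ first reaches $\mu_\alpha$, and use strict monotonicity in $\lambda$ for non-attainment above it. The final non-attainment paragraph is essentially identical to the paper's (Proposition 3.2(2)). The main divergence is in the compactness step, and that is also where your sketch has a real gap.

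Your central inequality $f(\lambda)\ge [u]_{s,\Omega}^2+\lambda\|u\|_{L^2(\Omega)}^2+\mu_\alpha(1-A)^{2/2_{s,\alpha}}$ requires $\liminf_k[u_k-u]^2_{s,\Omega}\ge\mu_\alpha\,\liminf_k\|u_k-u\|^2_{s,\alpha,\Omega}$. This is not a consequence of \eqref{global-mu}: the local Gagliardo seminorm $[\cdot]_{s,\Omega}$ integrates only over $\Omega\times\Omega$ and is therefore \emph{smaller} than the full $[\cdot]_s$ appearing in the definition of $\mu_\alpha$, so the global inequality does not transfer to $\Omega$ for free. You flag this as the ``central difficulty'' and leave it unproved; as written it is a genuine gap, and a cut-off argument fixing it for concentrating sequences would need to be supplied carefully (the commutator errors of the cut-off are not $o(1)$ uniformly, only after using the strong $L^2$ convergence).

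The paper takes a route that avoids the local/global seminorm comparison for the concentrating piece altogether. It first proves a uniform local Hardy--Sobolev inequality (Lemma~3.1): for every $\varepsilon>0$ there is $C_1(\varepsilon)$ with
\begin{equation*}
\frac{\mu_\alpha}{1+\varepsilon}\left(\int_\Omega\frac{|u|^{2_{s,\alpha}}}{|x|^\alpha}\,dx\right)^{2/2_{s,\alpha}}\le [u]^2_{s,\Omega}+C_1(\varepsilon)\int_\Omega|u|^2\,dx\quad\text{for all }u\in H^s(\Omega),
\end{equation*}
obtained by a near/far partition of unity whose commutator errors produce exactly the $L^2$ correction. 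This is then used \emph{only} to rule out $u\equiv 0$: if $u\equiv 0$, the strong $L^2$ convergence $u_k\to 0$ annihilates the $C_1(\varepsilon)\|u_k\|_2^2$ term and forces $\mu_\alpha\le\mu_{\alpha,\lambda}(\Omega)$, a contradiction. With $u\not\equiv 0$ secured, the Brezis--Lieb step is run \emph{self-referentially with $\mu_{\alpha,\lambda}(\Omega)$ applied to both $u$ and $u_k-u$} (both, after normalizing their weighted $2_{s,\alpha}$-norms, are admissible competitors for $\mu_{\alpha,\lambda}(\Omega)$), yielding $\mu_{\alpha,\lambda}(\Omega)(A^q+B_k^q)\le\mu_{\alpha,\lambda}(\Omega)+o(1)$ with $q=2/2_{s,\alpha}<1$, and strict subadditivity closes the argument. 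In short, the missing comparison with $\mu_\alpha$ that you need for the bubble is replaced by a uniform inequality holding for all of $H^s(\Omega)$ plus a comparison with $\mu_{\alpha,\lambda}(\Omega)$ itself. If you wanted to keep your version, you would essentially have to reprove Lemma~3.1; as an aside, once you have your displayed inequality the case distinction $A\in\{0,1\}$ is redundant, since the same inequality already rules out $A=0$ because $\mu_\alpha>f(\lambda)$.
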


The rest of the paper is organize as follows. In Section \ref{preliminaries}, we gather some preliminaries and features of the constant $\mu_{\alpha,\lambda}(\Omega)$.  Section \ref{zeroinside} is dedicated to the proof of Theorem \ref{main-theo}. The crucial ingredients are the properties of $\mu_{\alpha,\lambda}(\Omega)$ seen as a function in $\lambda$ and a fractional Hardy-Sobolev type inequality. 

\section{Preliminaries}\label{preliminaries}

The relation between the global constant $\mu_{\alpha}$ and $\mu_{\alpha,\lambda}(\Omega)$, defined in \eqref{global-mu} and \eqref{defi-mu-alpha-lambda} respectively, will be a key element for the non-existence result in Theorem \ref{main-theo}.  As mentioned, some features of $\mu_{\alpha,\lambda}(\Omega)$ seen as a function in the parameter $\lambda$ play an important role as well. We start with the following basic lemma. 

\begin{lema}\label{cut-off-lemma} 
 Let  $\phi \in C_c^{\infty}(\Omega)$ and $u\in \dot{H}^s(\R^n)$ be such that $\|u\|_{s,\alpha}<\infty$, and $|u(x)|\le \frac{C}{|x|^{n-2s}}$ if $|x|\ge 1$. Then, $\phi u \in H^s(\Omega)$.
\end{lema}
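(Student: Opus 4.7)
The plan is to verify the two conditions defining $H^s(\Omega)$: that $\phi u \in L^2(\Omega)$ and that the Gagliardo seminorm $[\phi u]_{s,\Omega}$ is finite.

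For the $L^2$ part, I would first deduce $u \in L^2(\Omega)$ from the hypotheses alone. Since $\Omega$ is bounded with $0\in\Omega$, there is $R>0$ with $\Omega \subset B_R$, and hence
$$
\int_{\Omega} |u|^{2_{s,\alpha}}\,dx \le R^\alpha \int_{\Omega} \frac{|u(x)|^{2_{s,\alpha}}}{|x|^\alpha}\,dx \le R^\alpha \|u\|_{s,\alpha}^{2_{s,\alpha}} < \infty.
$$
Since $\alpha<2s$ forces $2_{s,\alpha}>2$ and $|\Omega|<\infty$, Hölder's inequality yields $u\in L^2(\Omega)$. Boundedness of $\phi$ then gives $\phi u\in L^2(\Omega)$.

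For the seminorm part, the key step is the commutator-type splitting
$$
\phi(x)u(x)-\phi(y)u(y) = \phi(x)\bigl(u(x)-u(y)\bigr)+u(y)\bigl(\phi(x)-\phi(y)\bigr).
$$
Applying $(a+b)^2\le 2(a^2+b^2)$ reduces the problem to two double integrals. The first is estimated at once by
$$
\int_{\Omega\times\Omega} \frac{|\phi(x)|^2\,|u(x)-u(y)|^2}{|x-y|^{n+2s}}\,dxdy \le \|\phi\|_\infty^2\,[u]_s^2<\infty,
$$
by extending the domain of integration from $\Omega\times\Omega$ to $\R^n\times\R^n$ and invoking $u\in \dot{H}^s(\R^n)$.

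The main step is controlling the second term. Setting
$$
K(y):=\int_{\Omega} \frac{|\phi(x)-\phi(y)|^2}{|x-y|^{n+2s}}\,dx,
$$
I would prove $\sup_{y\in\Omega}K(y)=:M<\infty$ by splitting the inner integral at $|x-y|=1$: on $\{|x-y|<1\}$ use the Lipschitz bound $|\phi(x)-\phi(y)|\le\|\nabla\phi\|_\infty|x-y|$, so the integrand is dominated by $|x-y|^{2-n-2s}$ and the radial integral $\int_0^1 r^{1-2s}\,dr$ converges because $s<1$; on $\{|x-y|\ge 1\}$ use the trivial bound $|\phi(x)-\phi(y)|\le 2\|\phi\|_\infty$, so the integrand is controlled by $|x-y|^{-n-2s}$, which is integrable at infinity. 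Combining, the second double integral is bounded by $M\,\|u\|_{L^2(\Omega)}^2$, which is finite by the first step.

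The only mildly delicate point is the local contribution to $K(y)$, where one combines the Lipschitz estimate on $\phi$ with $s<1$ to secure integrability; everything else is bookkeeping. Note that the decay assumption $|u(x)|\le C|x|^{-(n-2s)}$ for $|x|\ge 1$ is not used for this particular lemma, since $\phi$ localizes all integrals to the bounded set $\Omega$.
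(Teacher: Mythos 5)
Your proof is correct and follows essentially the same strategy as the paper: split $\phi(x)u(x)-\phi(y)u(y)$ into a $u$-difference piece (bounded by $\|\phi\|_\infty^2[u]_s^2$) and a $\phi$-difference piece, and control the latter by a uniform bound on $\int_\Omega|\phi(x)-\phi(y)|^2|x-y|^{-n-2s}\,dx$ obtained from the Lipschitz estimate near the diagonal and the trivial bound far from it; the choice of $(a+b)^2\le 2(a^2+b^2)$ in place of Minkowski is cosmetic. Your closing observation is accurate and worth emphasizing: the decay hypothesis $|u(x)|\le C|x|^{-(n-2s)}$ for $|x|\ge 1$ is genuinely superfluous for this lemma, since $\Omega$ is bounded and $\|u\|_{L^2(\Omega)}$ already follows from $\|u\|_{s,\alpha}<\infty$ via H\"older (as you show, and as the paper itself shows in its opening paragraph). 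The paper's proof re-derives $\int_\Omega|u|^2\,dx<\infty$ a second time using the decay on $\{|x|\ge 1\}$, which is redundant; the hypothesis is present in the statement because the lemma is applied to the extremal $u_0$ of \cite{Marano-Mosconi}, which satisfies that decay, not because it is needed here.
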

\begin{proof} %Let $u\in \dot{H}^s(\R^n)$ and $\phi \in C_c^{\infty}(\Omega)$ be as in the statement. 
It is clear that $\phi u \in L^2(\Omega)$. Indeed, notice that $\phi u=0$ in $\R^n \setminus \Omega$, since $\supp \phi \subset \Omega$. It is clear that $\phi u \in L^2(\Omega)$, since the embedding $L^{2_{s,\alpha}}(\Omega, |x|^{-\alpha}dx)\hookrightarrow L^2(\Omega)$ is continuous, as a consequence of H\"older's inequality with $p=\frac{2_{s,\alpha}}{2}, p'=\frac{n-\alpha}{2s-\alpha}$ and the boundedness of  $\Omega$. 
%	Indeed, by H\"older inequality with $p=\frac{2_{s,\alpha}}{2}, p'=\frac{n-\alpha}{2s-\alpha}$,
%	\begin{align*}
%	\int_{\Omega}|\phi u|^2 dx \le C(\phi, \Omega)\left(\int_{\Omega}\frac{|u|^{2_{s,\alpha}}}{|x|^{\alpha}} dx\right)^{\frac{2}{2_{s,\alpha}}}<\infty.
%	\end{align*}
	
To see $[\phi u]_{s,\Omega}<\infty$, observe that 
	\begin{equation}\label{add-subtract}
	|\phi(x)u(x)-\phi(y)u(y)| \le |u(x)||\phi(x)-\phi(y)|+|\phi(y)||u(x)-u(y)|.
	\end{equation}
	Therefore, by Minkowski's inequality,  we get
	\begin{align*}
	[\phi u]_{s,\Omega} &\le \left( \int_{\Omega} |u(x)|^2\int_{\Omega}\frac{|\phi(x)-\phi(y)|^2}{|x-y|^{n+2s}}\, dydx\right)^{\frac{1}{2}}+\left( \int_{\Omega} |\phi(x)|^2 \int_{\Omega} \frac{|u(x)-u(y)|^2}{|x-y|^{n+2s}}\, dydx\right)^{\frac{1}{2}}\\
	&=: I + C(\phi)[u]_s.
	\end{align*}
	where we have used $|\phi(x)|^2\le \|\phi\|_{\infty}^2$ in the second term. For $I$, notice that for $x\in \R^n$,
	\begin{align*}
	&\int_{\Omega}\frac{|\phi(x)-\phi(y)|^2}{|x-y|^{n+2s}}\,dy \le \int_{\R^n}\frac{|\phi(x)-\phi(y)|^2}{|x-y|^{n+2s}}\,dy  \\
	&\le \int_{|x-y|<1}\frac{|\phi(x)-\phi(y)|^2}{|x-y|^{n+2s}}\, dy+\int_{|x-y|\ge 1}\frac{|\phi(x)-\phi(y)|^2}{|x-y|^{n+2s}}\, dy	\\
	&\le \|\nabla \phi\|_{\infty}^2\int_{|x-y|<1}\frac{1}{|x-y|^{n+2s-2}}\, dy+2\|\phi\|_{\infty}^2 \int_{|x-y|\ge 1}\frac{1}{|x-y|^{n+2s}}\, dy\\
	&\le |B_1(0)|\|\nabla \phi\|_{\infty}^2\int_{0}^1\frac{r^{n-1}}{r^{n+2s-2}}\, dr+2|B_1(0)|\|\phi\|_{\infty}^2 \int_1^{\infty}\frac{r^{n-1}}{r^{n+2s}}\, dr\\
	&\le \frac{1}{2(1-s)}|B_1(0)|\|\nabla \phi\|_{\infty}^2+\frac{1}{s}|B_1(0)|\|\phi\|_{\infty}^2 =:C(\phi, n, s).
	\end{align*}
	Finally, uniformly in $x\in\R^n$, 
	\begin{equation}\label{cut-off-bound}
	\int_{\R^n}\frac{|\phi(x)-\phi(y)|^2}{|x-y|^{n+2s}}\, dy\le C(\phi, n, s).
	\end{equation}
 We split the integral and apply H\"older inequality in $|x|<1$ and the behavior of $u$ for $|x|\ge 1$, to obtain
	\begin{align*}
	\int_{\Omega} |u|^2 \, dx &\le  \int_{\Omega \cap\{ |x|<1 \}} \frac{|u|^2}{|x|^{\frac{2\alpha}{2_{s,\alpha}}}}\, dx + \int_{|x|\ge 1} \frac{1}{|x|^{2(n-2s)}}\, dx \le  C \left(\int_{\R^n} \frac{|u|^{2_{s,\alpha}}}{|x|^{\alpha}}\, dx\right)^{\frac{2}{2_{s,\alpha}}} + C <\infty,
	\end{align*}
	where we have used $n>4s$ in the second term. Hence, $[\phi u]_{s,\Omega} <\infty$, which finishes the proof of $\phi u \in H^s(\Omega)$. 
\end{proof}

Now, we are able to establish the main result of this section, which gives useful properties of $\mu_{\alpha,\lambda}(\Omega)$ seen us a function in the parameter $\lambda>0$. Part of the next Lemma relies on the existence of an extremal function for the global constant $\mu_{\alpha}$, and its behavior for $|x|\ge 1$, given in \cite{Marano-Mosconi}.

\begin{lema}\label{basic-lemma} Let $\lambda>0$ and $\Omega \subset \R^n$ be an open bounded domain such that $0\in \Omega$. 
	\begin{itemize}
		\item[(1)]  $\mu_{\alpha,\lambda}(\Omega) \le \mu_{\alpha}$, for every $\lambda>0$.
		\item[(2)] $\mu_{\alpha,\lambda}(\Omega)$ is continuous and nondecreasing with respect to $\lambda$.
		\item[(3)] $\lim_{\lambda \to 0} \mu_{\alpha,\lambda}(\Omega)=0$,
	\end{itemize}
	where $\mu_{\alpha,\lambda}(\Omega)$, and $\mu_{\alpha}$ are defined in \eqref{defi-mu-alpha-lambda}, and \eqref{global-mu} respectively.
\end{lema}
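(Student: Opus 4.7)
The plan is to dispatch items (2) and (3) quickly and then concentrate on (1), which is the substantial step. For item (3), I use a constant test function: setting $u \equiv c$ on $\Omega$ with $c := \bigl(\int_\Omega |x|^{-\alpha}\,dx\bigr)^{-1/2_{s,\alpha}}$ (the integral being finite because $\alpha<2s<n$) gives an admissible $u \in H^s(\Omega)$ with $[u]_{s,\Omega}=0$, so
\begin{equation*}
0 \le \mu_{\alpha,\lambda}(\Omega) \le \lambda c^2 |\Omega|,
\end{equation*}
and the right-hand side tends to $0$ as $\lambda\to 0^+$. For the monotonicity in (2), the pointwise inequality $[u]_{s,\Omega}^2 + \lambda_1\|u\|_{L^2(\Omega)}^2 \le [u]_{s,\Omega}^2 + \lambda_2\|u\|_{L^2(\Omega)}^2$ for $0<\lambda_1\le\lambda_2$ immediately yields $\mu_{\alpha,\lambda_1}(\Omega) \le \mu_{\alpha,\lambda_2}(\Omega)$. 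For continuity, I fix $\lambda_0>0$, pick a minimizing sequence $u_k$ for $\mu_{\alpha,\lambda_0}(\Omega)$, use $\lambda_0 \|u_k\|_{L^2(\Omega)}^2 \le \mu_{\alpha,\lambda_0}(\Omega)+o(1)$ to bound $\|u_k\|_{L^2(\Omega)}^2$ uniformly, and test $u_k$ against $\mu_{\alpha,\lambda}(\Omega)$ for nearby $\lambda$; after swapping the roles of $\lambda$ and $\lambda_0$ this yields a two-sided Lipschitz estimate on compact subsets of $(0,\infty)$.

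For item (1) I use a concentration argument. Let $U\in\dot{H}^s(\R^n)$ be the extremal for $\mu_\alpha$ from \cite{Marano-Mosconi}, so that $\int_{\R^n}|U|^{2_{s,\alpha}}/|x|^\alpha\,dx=1$, $[U]_s^2=\mu_\alpha$, and $|U(x)|\le C|x|^{-(n-2s)}$ for $|x|\ge 1$. A preliminary observation is that $U\in L^2(\R^n)$: the decay at infinity gives $\int_{|x|\ge 1}|U|^2<\infty$ precisely because $n>4s$, while the same H\"older argument used in the proof of Lemma \ref{cut-off-lemma} bounds $\int_{|x|<1}|U|^2$ by a constant times $\bigl(\int|U|^{2_{s,\alpha}}/|x|^\alpha\bigr)^{2/2_{s,\alpha}}<\infty$. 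Introduce the concentrating family
\begin{equation*}
U_\varepsilon(x) := \varepsilon^{-(n-2s)/2}\,U(x/\varepsilon),\qquad 0<\varepsilon\le 1,
\end{equation*}
which preserves $[U_\varepsilon]_s^2=\mu_\alpha$ and $\int|U_\varepsilon|^{2_{s,\alpha}}/|x|^\alpha\,dx=1$ by the chosen scaling, and satisfies $\|U_\varepsilon\|_{L^2(\R^n)}^2=\varepsilon^{2s}\|U\|_{L^2(\R^n)}^2\to 0$ as $\varepsilon\to 0$. Fix a cutoff $\phi\in C_c^\infty(\Omega)$ with $0\le\phi\le 1$ and $\phi\equiv 1$ on some ball $B_{r_0}(0)\Subset\Omega$; by Lemma \ref{cut-off-lemma}, $\phi U_\varepsilon\in H^s(\Omega)$.

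It remains to estimate the three ingredients of $\mu_{\alpha,\lambda}(\Omega)$ on $\phi U_\varepsilon$. The bound $\|\phi U_\varepsilon\|_{L^2(\Omega)}^2\le\|U_\varepsilon\|_{L^2(\R^n)}^2\to 0$ is immediate. For the constraint, $|\phi|^{2_{s,\alpha}}\le 1$ gives the upper estimate $\int_\Omega|\phi U_\varepsilon|^{2_{s,\alpha}}/|x|^\alpha\le 1$, while restricting to $B_{r_0}(0)$ and rescaling yield the matching lower bound $\ge\int_{B_{r_0/\varepsilon}(0)}|U|^{2_{s,\alpha}}/|x|^\alpha\,dx\to 1$. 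For the Gagliardo seminorm, I apply the weighted Young inequality $(a+b)^2\le(1+\eta)a^2+(1+\eta^{-1})b^2$, $\eta>0$, to the decomposition $\phi(x)U_\varepsilon(x)-\phi(y)U_\varepsilon(y)=\phi(y)(U_\varepsilon(x)-U_\varepsilon(y))+U_\varepsilon(x)(\phi(x)-\phi(y))$ and combine with the kernel bound \eqref{cut-off-bound} to obtain
\begin{equation*}
[\phi U_\varepsilon]_{s,\Omega}^2 \le (1+\eta)\,[U_\varepsilon]_s^2 + (1+\eta^{-1})\,C(\phi,n,s)\,\|U_\varepsilon\|_{L^2(\R^n)}^2.
\end{equation*}
Letting $\varepsilon\to 0$ first and then $\eta\to 0^+$ gives $\limsup_\varepsilon[\phi U_\varepsilon]_{s,\Omega}^2\le\mu_\alpha$; normalizing $v_\varepsilon := \phi U_\varepsilon/\|\phi U_\varepsilon\|_{s,\alpha,\Omega}$ produces an admissible competitor with $\mu_{\alpha,\lambda}(\Omega)\le[v_\varepsilon]_{s,\Omega}^2+\lambda\|v_\varepsilon\|_{L^2(\Omega)}^2\to\mu_\alpha$. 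The main obstacle is exactly this Gagliardo estimate: the naive $(a+b)^2\le 2a^2+2b^2$ would produce $2\mu_\alpha$ instead of $\mu_\alpha$, so the $\eta$-refinement (with $\varepsilon\to 0$ taken \emph{before} $\eta\to 0$) is essential, and the whole scheme hinges on the $L^2$ decay $\|U_\varepsilon\|_{L^2(\R^n)}^2=O(\varepsilon^{2s})$, itself a consequence of the hypothesis $n>4s$.
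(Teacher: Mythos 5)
Your proof is correct, and its overall architecture (constant test function for (3), monotonicity trivially from the definition for (2), concentrating a cutoff of the extremal for $\mu_\alpha$ for (1)) is the same as the paper's. The one genuinely different technical step is the control of the Gagliardo seminorm $[\phi U_\varepsilon]_{s,\Omega}^2$: the paper applies Minkowski's inequality to get $[u_\varepsilon]_{s,\Omega}\le[u_0]_s+R_\varepsilon^{1/2}$ and then kills the remainder $R_\varepsilon$ via the Dominated Convergence Theorem, exhibiting a majorant $\Psi\in L^1(\R^n\times\R^n)$; you instead apply the weighted Young inequality $(a+b)^2\le(1+\eta)a^2+(1+\eta^{-1})b^2$, bound the cross term by $C(\phi,n,s)\|U_\varepsilon\|_{L^2(\R^n)}^2$ via the kernel estimate \eqref{cut-off-bound}, and then use the explicit scaling $\|U_\varepsilon\|_{L^2}^2=\varepsilon^{2s}\|U\|_{L^2}^2$. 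Your route is slightly more quantitative (an $O(\varepsilon^{2s})$ bound rather than an $o(1)$ one) and avoids the dominated-convergence verification, at the cost of introducing the auxiliary parameter $\eta$ and the attendant care with the order of limits, which you correctly emphasize. Both routes need the same input: $U\in L^2(\R^n)$, which you establish explicitly and the paper establishes implicitly when showing $\Psi\in L^1$. Your continuity argument in (2), via a Lipschitz estimate on compact subsets of $(0,\infty)$, is a reasonable fleshing-out of what the paper simply asserts follows from the definition.
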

\begin{proof}
	(1) Let $\ve>0, R>0$ and $\phi \in C_c^{\infty}(\Omega)$ be such that $0\le \phi\le1$, $\phi=1$ in $B_R(0) \subset \Omega$, $\phi=0$ in $\Omega \setminus B_{2R}(0)$.  
	
	Let $u_0 $ be a positive minimizer of $\mu_{\alpha}$, see \cite{Marano-Mosconi} for the existence of $u_0$. Consider
	$$
	u_{\ve}(x):= \ve^{-\frac{n-2s}{2}} u_0\left( \frac{x}{\ve}\right) \phi(x), \qquad v_{\ve}(x):= \frac{1}{\|u_{\ve}\|_{s,\alpha,\Omega}}u_{\ve}(x).
	$$ 
	Then, $v_{\ve} \in H^s(\Omega)$, by Lemma \ref{cut-off-lemma}, since $u_0$ verifies the growth condition $|u_0(x)|\le \frac{C}{|x|^{n-2s}}$ if $|x|\ge 1$, given in \cite[Theorem 1.1]{Marano-Mosconi}. Moreover, $\|v_{\ve}\|_{s,\alpha,\Omega}=1$. Thus, 
	\begin{equation}\label{v_epsilon}
	\mu_{\alpha,\lambda}(\Omega)\le [v_{\ve}]_{s,\Omega}^2+\lambda \int_{\Omega} v_{\ve}^2(x)\, dx.
	\end{equation}
	Observe that, after a change of variables, 
	\begin{align*}
	\int_{\Omega} \frac{u_{\ve}^{2_{s,\alpha}}(x)}{|x|^{\alpha}}\, dx&=\int_{\ve^{-1}\Omega} \phi^{2_{s,\alpha}}(\ve y) \frac{u_0^{2_{s,\alpha}}(y)}{|y|^{ \alpha}} \, dy.
	\end{align*}
	Since $\phi=1$ in $B_R(0)\subset \Omega$, $0\le \phi \le 1$ and $\supp \phi \subset B_{2R}(0)$, we get 
	$$
	\int_{B_{\frac{R}{\ve}}(0)} \frac{u_0^{2_{s,\alpha}}(y)}{|y|^{ \alpha}} \, dy  \le \int_{\Omega} \frac{u_{\ve}^{2_{s,\alpha}}(x)}{|x|^{\alpha}}\, dx \le \int_{B_{\frac{2R}{\ve}}(0)}  \frac{u_0^{2_{s,\alpha}}(y)}{|y|^{ \alpha}} \, dy,
	$$
	from where we deduced
	\begin{equation}\label{a-epsilon}
	\lim_{\ve \to 0}	\int_{\Omega}\frac{u_{\ve}^{2_{s,\alpha}}(x)}{|x|^{\alpha}}\, dx=\int_{\R^n}  \frac{u_0^{2_{s,\alpha}}(y)}{|y|^{ \alpha}} \, dy=1.
	\end{equation}
	%Denote by
	%\begin{equation}
	%a_{\ve,s,\alpha}:=\left(\int_{\Omega} \frac{u_{\ve}^{2_{s,\alpha}}(x)}{|x|^{\alpha}}\, dx\right)^{\frac{1}{2_{s,\alpha}}}, \, \text{ then } \quad \lim_{\ve\to 0} a_{\ve,s,\alpha}=1.
	%\end{equation}
	Moreover, 
	\begin{align*}
	\int_{\Omega} v_{\ve}^2(x)\, dx&= \frac{\ve^{2s-n}}{\|u_{\ve}\|_{s,\alpha,\Omega}^{2}}\,   \int_{\Omega} \phi^2(x) u_{0}\left( \frac{x}{\ve}\right)^2 \, dx =\frac{\ve^{2s}}{\|u_{\ve}\|_{s,\alpha,\Omega}^{2}}\,   \int_{B_{\frac{2R}{\ve}(0)}} \phi^2(\ve y) u_{0}\left(y\right)^2 \, dy =O(\ve^{2s}).
	\end{align*}
	The last identity is due to \eqref{a-epsilon}, and the fact that 
	\begin{equation}\label{norma-2}
	\int_{B_{\frac{2R}{\ve}(0)}} \phi^2(\ve y) u_{0}\left(y\right)^2 \, dy\le C.
	\end{equation} 
	Indeed, by \cite[Theorem 1.1]{Marano-Mosconi}, we know that for 
	\begin{equation}\label{asymp}
	|u_0(y)|\le \frac{C}{|y|^{n-2s}}, \quad \text{ for every } |y|\ge 1.
	\end{equation}
	Then, there exist $\ve_0>0$ such that for every $0<\ve<\ve_0$ we have $\frac{2R}{\ve}>1$. Therefore,  for every $0<\ve<\ve_0$,
	\begin{align*}
	\int_{B_{\frac{2R}{\ve}(0)}} \phi^2(\ve y) u_{0}\left(y\right)^2 \, dy&= \left( \int_{ \{|y|<1\}}+ \int_{ \{1\le|y|\le \frac{2R}{\ve}\}} \right) \phi^2(\ve y) u_{0}\left(y\right)^2 \, dy\\
	&=: I+II.
	\end{align*}
	To manage $I$, recall $0\le \phi\le 1$, and apply H\"older's inequality with $p=\frac{2_{s,\alpha}}{2}, p'=\frac{n-\alpha}{2s-\alpha}$, to obtain
	\begin{align*}
	I&\le C\left(\int_{ \{|y|<1\}} \frac{u_0^{2_{s,\alpha}}(y)}{|y|^{\alpha}}\, dy\right)^{\frac{2}{2_{s,\alpha}}} \le C\left(\int_{\R^n} \frac{u_0^{2_{s,\alpha}}(y)}{|y|^{\alpha}}\, dy\right)^{\frac{2}{2_{s,\alpha}}}= C.
	\end{align*}
	To control $II$, we use $0\le \phi\le 1$, \eqref{asymp} and the fact that $n>4s$, to find 
	\begin{align*}
	II&\le C\int_{|y|\ge 1}\frac{1}{|y|^{2(n-2s)}}\, dy= C\int_1^{\infty}r^{-n-1+4s}dr=C.
	\end{align*}

	Now, we have to estimate $[v_{\ve}]_{s,\Omega}^2= \|u_{\ve}\|_{s,\alpha,\Omega}^{-2}[u_{\ve}]_{s,\Omega}^2$. Thanks to \eqref{a-epsilon}, it will be enough to analyze  $[u_{\ve}]^2_{s,\Omega}$. Similar to what we have done in Lemma \ref{cut-off-lemma} (\eqref{add-subtract}, Minkowski's inequality), but changing variables and recalling  $0\le \phi\le1$, we get 
	\begin{align*}
	[u_{\ve}]_{s,\Omega}&\le [u_0]_s+\left(\int_{\ve^{-1}\Omega\times \ve^{-1}\Omega}\frac{u_0(x)^2|\phi(\ve x)-\phi(\ve y)|^2}{|x-y|^{n+2s}}\, dxdy\right)^{\frac{1}{2}}.\\
	%&=\mu_{\alpha}^{\frac{1}{2}}+\left(\int_{\ve^{-1}\Omega\times \ve^{-1}\Omega}\frac{u_0(x)^2|\phi(\ve x)-\phi(\ve y)|^2}{|x-y|^{n+2s}}\, dxdy \right)^{\frac{1}{2}}.\\
	\end{align*}
	Since $u_0$ is an extremal function for the constant $\mu_{\alpha}$, we obtain
	\begin{equation}\label{casi-casi}
	[u_{\ve}]_{s,\Omega}\le \mu_{\alpha}^{\frac{1}{2}}+\left(\int_{\ve^{-1}\Omega\times \ve^{-1}\Omega}\frac{u_0(x)^2|\phi(\ve x)-\phi(\ve y)|^2}{|x-y|^{n+2s}}\, dxdy \right)^{\frac{1}{2}}.
	\end{equation}
	%\begin{align*}
	%[u_{\ve}]_{s,\Omega}&=\ve^{\frac{2s-n}{2}}\left(\int_{\Omega\times \Omega}\frac{|\phi(x)u_0\left(\frac{x}{\ve}\right)-\phi(y)u_0\left(\frac{y}{\ve}\right)|^2}{|x-y|^{n+2s}}\, dxdy\right)^{\frac{1}{2}}\\
	%&=\left(\int_{\ve^{-1}\Omega\times \ve^{-1}\Omega}\frac{|\phi(\ve z)u_0(z)-\phi(\ve w)u_0(w)|^2}{|z-w|^{n+2s}}\, dzdw \right)^{\frac{1}{2}}\\
	%&\le \left( \int_{\ve^{-1}\Omega\times \ve^{-1}\Omega}\frac{\phi(\ve y)^2|u_0(x)-u_0(y)|^2}{|x-y|^{n+2s}}\, dxdy\right)^{\frac{1}{2}}\\
	%&+\left(\int_{\ve^{-1}\Omega\times \ve^{-1}\Omega}\frac{|u_0(x)|^2|\phi(\ve x)-\phi(\ve y)|^2}{|x-y|^{n+2s}}\, dxdy\right)^{\frac{1}{2}}\\
	%&\le [u_0]_s+\left(\int_{\ve^{-1}\Omega\times \ve^{-1}\Omega}\frac{|u_0(x)|^2|\phi(\ve x)-\phi(\ve y)|^2}{|x-y|^{n+2s}}\, dxdy\right)^{\frac{1}{2}}\\
	%&=\mu_{\alpha}^{\frac{1}{2}}+\left(\int_{\ve^{-1}\Omega\times \ve^{-1}\Omega}\frac{|u_0(x)|^2|\phi(\ve x)-\phi(\ve y)|^2}{|x-y|^{n+2s}}\, dxdy \right)^{\frac{1}{2}}\\
	%\end{align*}
	We will show that 
	\begin{equation}
	\label{remained-term}
	\lim_{\ve \to 0} \int_{\ve^{-1}\Omega\times \ve^{-1}\Omega}\frac{u_0(x)^2|\phi(\ve x)-\phi(\ve y)|^2}{|x-y|^{n+2s}}\, dxdy=0.
	\end{equation}
	That will be a consequence of the Lebesgue Dominated convergence Theorem. Clearly, 
	$$
	\lim_{\ve \to 0} \chi_{\ve^{-1}\Omega\times \ve^{-1}\Omega}(x,y)\frac{u_0(x)^2|\phi(\ve x)-\phi(\ve y)|^2}{|x-y|^{n+2s}}= 0 \quad \text{ a.e. in } \R^n \times \R^n.
	$$
	To find the dominated function in $L^1(\R^n\times \R^n)$, we split the domain, and use \eqref{asymp}. Indeed, for every $0<\ve<1$,
	$$
	\frac{u_0(x)^2|\phi(\ve x)-\phi(\ve y)|^2}{|x-y|^{n+2s}} \le C\psi(x,y)\left(\chi_{\{|x|<1\}} u_0(x)^2 + \chi_{\{|x|\ge 1\}}\frac{1}{|x|^{2(n-2s)}}\right)=:\Psi(x,y),
	$$
	where $\psi(x,y)= \frac{1}{|x-y|^{n+2s-2}}\chi_{\{|x-y|<1 \}} + \frac{1}{|x-y|^{n+2s}}\chi_{\{|x-y|\ge1 \}}$. For the previous inequality, we have used 
	\begin{equation*}
	\frac{|\phi(\ve x)-\phi(\ve y)|^2}{|x-y|^{n+2s}}\le \begin{cases} \frac{C\ve^2}{|x-y|^{n+2s-2}} & \text{ if } |x-y|<1,\\
	\frac{C}{|x-y|^{n+2s}} & \text{ if } |x-y|\ge1. \\ 
	\end{cases}
	\end{equation*}

	Let us see that $\Psi \in L^1(\R^n\times \R^n)$.
	\begin{align*}
	\int_{\R^n\times\R^n} \Psi(x,y) \, dxdy &\le C\int_{|x|<1}u_0(x)^2 \int_{\R^n}\psi(x,y)\, dydx + C\int_{|x|\ge 1}\frac{1}{|x|^{2(n-2s)}}\int_{\R^n} \psi(x,y)\, dydx\\
	&\le C\int_{|x|<1}u_0(x)^2 dx + C\int_{|x|\ge 1}\frac{1}{|x|^{2(n-2s)}}dx\\
	&\le C\int_{|x|<1}\frac{u_0(x)^2}{|x|^{\frac{2\alpha}{2_{s,\alpha}}}} dx + C\\
	\end{align*}
	In the last step, we have used $n>4s$ in the second term. Then, apply H\"older inequality with $p=\frac{2_{s,\alpha}}{2}, p'=\frac{n-\alpha}{2s-\alpha}$ in the first term, to obtain
	\begin{align*}
	\int_{\R^n\times\R^n} \Psi(x,y) \, dxdy &\le C\left(\int_{\R^n} \frac{u_0^{2_{s,\alpha}}(x)}{|x|^{\alpha}}\, dx\right)^{\frac{2}{2_{s,\alpha}}} + C=C.
	\end{align*}
	Hence, \eqref{remained-term} holds. Consequently, from \eqref{casi-casi},
	$$
	\limsup_{\ve \to 0}[u_{\ve}]^2_{s,\Omega}\le \mu_{\alpha}.
	$$
	Then, \eqref{v_epsilon} becomes
	\begin{align*}
	\mu_{\alpha,\lambda}(\Omega) \le  \frac{1}{\|u_{\ve}\|_{s,\alpha,\Omega}^{2}}[u_{\ve}]_{s,\Omega}^2 + O(\ve^{2s}).
	\end{align*}
	Taking the limit $\ve\to 0$, we conclude $\mu_{\alpha,\lambda}(\Omega) \le \mu_{\alpha}$.
	
	(2) It follows from the definition \eqref{defi-mu-alpha-lambda}.
	
	(3) Consider $c:= \left(\int_{\Omega}\frac{1}{|x|^{\alpha}} \, dx\right)^{-\frac{1}{2_{s,\alpha}}}\in H^s(\Omega)$. Then, 
	$$
	\mu_{\alpha,\lambda}(\Omega) \le [c]_{s,\Omega}^2+\lambda \int_{\Omega}c^2\, dx= \lambda c^2 |\Omega|.
	$$
	Now, take the limit $\lambda \to 0$ to conclude (3).
\end{proof}

The next Corollary will be one of the main tools for proving Theorem \ref{main-theo}. It is a straightforward consequence of Lemma \ref{basic-lemma}.
\begin{corol}\label{coro} One of the following statements holds:
	\begin{itemize}
		\item[(1)] For every $\lambda>0$, we have the strict inequality $\mu_{\alpha,\lambda}(\Omega) <\mu_{\alpha}$, and $\lim_{\lambda \to \infty}\mu_{\alpha,\lambda}(\Omega) =\mu_{\alpha}$.
		\item[(2)] There exists $\bar{\lambda}>0$ such that $\mu_{\alpha,\lambda}(\Omega) =\mu_{\alpha}$ for every $\lambda\ge \bar{\lambda}$.
		\end{itemize}
\end{corol}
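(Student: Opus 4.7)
The plan is to leverage the structural properties of $f(\lambda) := \mu_{\alpha,\lambda}(\Omega)$ established in Lemma \ref{basic-lemma}. By parts (1) and (2) of that lemma, $f$ is nondecreasing and bounded above by $\mu_\alpha$, so the limit $L := \lim_{\lambda \to \infty} f(\lambda) = \sup_{\lambda > 0} f(\lambda)$ exists and satisfies $L \le \mu_\alpha$. The argument then splits on whether the upper bound is actually attained: either (a) there exists $\lambda_0 > 0$ with $f(\lambda_0) = \mu_\alpha$, or (b) $f(\lambda) < \mu_\alpha$ for every $\lambda > 0$. In case (a), monotonicity combined with the upper bound immediately forces $f(\lambda) = \mu_\alpha$ for every $\lambda \ge \lambda_0$, giving conclusion (2) with $\bar{\lambda} = \lambda_0$.

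In case (b), the strict-inequality clause of conclusion (1) holds by hypothesis, and what remains is to show $L = \mu_\alpha$. I would argue by contradiction: assume $L < \mu_\alpha$, take $\lambda_n \to \infty$, and select near-minimizers $u_n \in H^s(\Omega)$ with $\|u_n\|_{s,\alpha,\Omega} = 1$ and $[u_n]_{s,\Omega}^2 + \lambda_n \int_\Omega |u_n|^2\, dx \to L$. Then $\int_\Omega |u_n|^2\, dx \to 0$ while $[u_n]_{s,\Omega}^2$ stays bounded by $L + o(1)$; combined with the uniform $H^s$-bound, Sobolev embedding, and the interpolation $\|u_n\|_{L^{2_{s,\alpha}}(\Omega)} \le \|u_n\|_{L^2(\Omega)}^{\theta}\|u_n\|_{L^{2_s^*}(\Omega)}^{1-\theta} \to 0$ (valid because $2 < 2_{s,\alpha} < 2_s^*$ when $0 < \alpha < 2s$), this forces $\int_{|x|>r}|u_n(x)|^{2_{s,\alpha}}|x|^{-\alpha}\, dx \to 0$ for every $r>0$, i.e.\ the weighted measures concentrate at the singularity $x = 0$. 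Choosing a concentration scale $t_n\to 0$ with $B_{t_n}(0)$ capturing, say, half of the weighted mass, and setting $w_n(y) := t_n^{(n-2s)/2} u_n(t_n y)$, the scale invariance of the Gagliardo seminorm and of the weighted $L^{2_{s,\alpha}}$-norm gives $[w_n]_{s, t_n^{-1}\Omega}^2 = [u_n]_{s,\Omega}^2 \le L + o(1)$ and $\|w_n\|_{s,\alpha, t_n^{-1}\Omega} = 1$, while $t_n^{-1}\Omega \nearrow \R^n$. Extending $w_n$ by zero to $\R^n$ should produce a competitor for $\mu_\alpha$ with Gagliardo seminorm $\le L + o(1) < \mu_\alpha$, contradicting the definition of $\mu_\alpha$.

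The main obstacle is the rigorous execution of this rescaling step in case (b), specifically controlling the extra term introduced by the zero-extension,
$$
2\int_{t_n^{-1}\Omega} |w_n(y)|^2 \int_{\R^n \setminus t_n^{-1}\Omega} \frac{dz}{|y-z|^{n+2s}}\, dy,
$$
and showing that it vanishes as $n\to\infty$. Splitting this into a contribution near $y = 0$ (where $\dist(y, \partial(t_n^{-1}\Omega))$ is bounded below by $t_n^{-1}\dist(0,\partial\Omega)/2$, so the scaling identity $\|w_n\|_{L^2(t_n^{-1}\Omega)}^2 = t_n^{-2s} \|u_n\|_{L^2(\Omega)}^2$ yields a bound of order $\|u_n\|_{L^2(\Omega)}^2 = O(1/\lambda_n)$) and a contribution away from $y = 0$ (which translates in original coordinates to an integral over $\Omega$ bounded away from the singularity, accessible via a fractional Hardy-type estimate) is the natural strategy, but a precise treatment requires careful bookkeeping of the scaling exponents relative to $t_n$.
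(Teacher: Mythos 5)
Your concentration-and-rescaling route to the limit $\lim_{\lambda\to\infty}\mu_{\alpha,\lambda}(\Omega)=\mu_\alpha$ has a genuine gap, living exactly where you suspected. Write $d:=\dist(0,\partial\Omega)$. The zero-extension of $w_n$ need not belong to $\dot H^s(\R^n)$: the outer interaction term coming from $|y|\ge t_n^{-1}d/2$ scales back (the whole interaction integral is scale invariant) to
$$
2\int_{\Omega\cap\{|x|\ge d/2\}}|u_n(x)|^2\int_{\R^n\setminus\Omega}\frac{dz}{|x-z|^{n+2s}}\,dx,
$$
which near $\partial\Omega$ is comparable to the boundary Hardy integral $\int_\Omega |u_n|^2\,\dist(x,\partial\Omega)^{-2s}\,dx$. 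That quantity is \emph{not} controlled by $[u_n]_{s,\Omega}$ and $\|u_n\|_{L^2(\Omega)}$, because the space in \eqref{Hs} is the full Gagliardo space on $\Omega$: its members need not vanish at $\partial\Omega$, and already $u\equiv 1$ gives an infinite integral when $s\ge\frac{1}{2}$. So the zero-extended $w_n$ is not a legitimate competitor for $\mu_\alpha$, and smallness of $\|u_n\|_{L^2(\Omega)}$ is of no help since the obstruction is boundary decay, not size. Repairing this would require inserting a cutoff that vanishes near $\partial\Omega$ and estimating the resulting commutator, which amounts to re-proving Lemma \ref{mu-positive}.

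In fact Lemma \ref{mu-positive} gives the limit claim in two lines, making the rescaling argument unnecessary. Fix $\ve>0$. For any $u\in H^s(\Omega)$ with $\|u\|_{s,\alpha,\Omega}=1$ and any $\lambda\ge C_1(\ve)$, inequality \eqref{auxiliary-constant} gives
$$
\frac{\mu_\alpha}{1+\ve}\le [u]_{s,\Omega}^2+C_1(\ve)\int_\Omega|u|^2\,dx\le [u]_{s,\Omega}^2+\lambda\int_\Omega|u|^2\,dx,
$$
so taking the infimum over such $u$ yields $\mu_{\alpha,\lambda}(\Omega)\ge\mu_\alpha/(1+\ve)$ for every $\lambda\ge C_1(\ve)$. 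Letting $\lambda\to\infty$ and then $\ve\to 0$ gives $\lim_{\lambda\to\infty}\mu_{\alpha,\lambda}(\Omega)\ge\mu_\alpha$, and the reverse bound is Lemma \ref{basic-lemma}(1). Your dichotomy argument from monotonicity and the upper bound is correct and is all that the paper can mean by calling the corollary a ``straightforward consequence of Lemma \ref{basic-lemma}''; the $\lambda\to\infty$ asymptotics really rest on Lemma \ref{mu-positive}, which appears later but involves no circularity.
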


\section{Existence of extremal function.}\label{zeroinside}

%Through all this section $\Omega \subset \R^n$ is a bounded domain such that $0\in \Omega$.
We start this section with the second ingredient to prove Theorem \ref{main-theo}, which is a fractional Hardy-Sobolev type inequality. We follow ideas from \cite{Hashizume}, where the local version was studied. 

\begin{lema} \label{mu-positive}
Let $\Omega \subset \R^n$ be a bounded domain such that $0\in \Omega$. Then, for every $\ve>0$ there exists a positive constant $C_1(\ve)=C_1(\Omega,n,s, \ve)$ such that 
\begin{equation}\label{auxiliary-constant}
\frac{\mu_{\alpha}}{1+\ve}\left( \int_{\Omega} \frac{|u(x)|^{2_{s,\alpha}}}{|x|^{\alpha}}\, dx \right)^{\frac{2}{2_{s,\alpha}}} \le [u]_{s,\Omega}^2 +C_1(\ve) \int_{\Omega}|u|^2\, dx
\end{equation}
for every $u\in H^s(\Omega)$.
\end{lema}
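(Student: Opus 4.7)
The strategy is to localize $u$ around the singularity at $0$, apply the global Hardy--Sobolev inequality \eqref{global-mu} to the localized piece, and control the complementary piece (where the weight $|x|^{-\alpha}$ is bounded) by the standard Sobolev embedding of $H^s(\Omega)$.

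Fix $r>0$ small enough that $B_{2r}(0)\subset \Omega$ and pick $\chi\in C_c^\infty(\Omega)$ with $0\le \chi\le 1$, $\chi\equiv 1$ on $B_r(0)$ and $\supp \chi\subset B_{2r}(0)$. Writing $u=\chi u+(1-\chi)u$ and combining Minkowski's inequality for $\|\cdot\|_{s,\alpha,\Omega}$ with $(a+b)^2\le(1+\eta_1)a^2+(1+\eta_1^{-1})b^2$ gives
\[
\|u\|_{s,\alpha,\Omega}^2 \le (1+\eta_1)\|\chi u\|_{s,\alpha,\Omega}^2 + (1+\eta_1^{-1})\|(1-\chi)u\|_{s,\alpha,\Omega}^2.
\]

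For the first summand, extend $\chi u$ by zero to $\R^n$. Since $\supp(\chi u)\subset\subset \Omega$, the extension lies in $\dot{H}^s(\R^n)$ and \eqref{global-mu} yields $\mu_\alpha\|\chi u\|_{s,\alpha,\Omega}^2\le [\chi u]_s^2$. I would then expand $[\chi u]_s^2$ by splitting $\R^n\times \R^n$: the $\Omega^c\times\Omega^c$ piece vanishes; the mixed piece is controlled by $C(r)\|u\|_{L^2(\Omega)}^2$ using $\dist(\supp\chi,\R^n\setminus\Omega)\ge r$; and the $\Omega\times\Omega$ piece is treated exactly as in Lemma \ref{cut-off-lemma} via
\[
|\chi(x)u(x)-\chi(y)u(y)|^2 \le (1+\eta_2)\chi(x)^2|u(x)-u(y)|^2 + (1+\eta_2^{-1})u(y)^2|\chi(x)-\chi(y)|^2,
\]
together with the uniform commutator bound \eqref{cut-off-bound}. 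This produces
\[
[\chi u]_s^2 \le (1+\eta_2)[u]_{s,\Omega}^2 + C(\chi,\eta_2,n,s)\|u\|_{L^2(\Omega)}^2.
\]

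For the second summand, $1-\chi$ vanishes on $B_r(0)$, so $|x|^{-\alpha}\le r^{-\alpha}$ on its support and
\[
\|(1-\chi)u\|_{s,\alpha,\Omega}^2 \le r^{-2\alpha/2_{s,\alpha}}\|u\|_{L^{2_{s,\alpha}}(\Omega)}^2.
\]
Since $0<\alpha<2s$ implies $2<2_{s,\alpha}<2_s^*$, the embedding $H^s(\Omega)\hookrightarrow L^{2_{s,\alpha}}(\Omega)$ is compact, so Ehrling's lemma (or interpolation between $L^2(\Omega)$ and $L^{2_s^*}(\Omega)$ followed by Young's inequality) gives, for every $\tilde\varepsilon>0$,
\[
\|u\|_{L^{2_{s,\alpha}}(\Omega)}^2 \le \tilde\varepsilon\,[u]_{s,\Omega}^2 + C(\tilde\varepsilon)\|u\|_{L^2(\Omega)}^2.
\]

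To close the argument, given $\varepsilon>0$ I would choose $\eta_1,\eta_2$ with $(1+\eta_1)(1+\eta_2)\le 1+\varepsilon/2$, then pick $\tilde\varepsilon$ small enough that $(1+\eta_1^{-1})\mu_\alpha r^{-2\alpha/2_{s,\alpha}}\tilde\varepsilon\le \varepsilon/2$. Substituting the two bounds, multiplying by $\mu_\alpha$ and dividing by $1+\varepsilon$ produces \eqref{auxiliary-constant}, with all $\|u\|_{L^2(\Omega)}^2$ contributions absorbed into $C_1(\varepsilon)$. The main obstacle is the middle step: extracting from $[\chi u]_s^2$ a coefficient of $[u]_{s,\Omega}^2$ arbitrarily close to $1$ requires both the careful splitting of the double integral into inner/boundary pieces and the cutoff commutator estimate of Lemma \ref{cut-off-lemma}; without the sharp control $(1+\eta_2)$ one cannot reach the constant $\mu_\alpha/(1+\varepsilon)$ on the left-hand side.
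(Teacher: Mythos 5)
Your proposal is correct, and it reaches the same inequality by a genuinely different decomposition, so a comparison is worthwhile.

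The paper does not split $u=\chi u+(1-\chi)u$. Instead it builds a \emph{quadratic} partition of unity $\eta_1=\phi^2/(\phi^2+(1-\phi)^2)$, $\eta_2=(1-\phi)^2/(\phi^2+(1-\phi)^2)$, so that $\eta_1+\eta_2=1$ with $\eta_i^{1/2}\in C^1$, and then splits the constraint integral via the triangle inequality in $L^{2_{s,\alpha}/2}(|x|^{-\alpha}dx)$ applied to $u^2=\eta_1 u^2+\eta_2 u^2$:
\[
\left(\int_\Omega \frac{|u|^{2_{s,\alpha}}}{|x|^\alpha}\,dx\right)^{2/2_{s,\alpha}}\le \sum_{i=1}^2\left(\int_\Omega \frac{|\eta_i^{1/2}u|^{2_{s,\alpha}}}{|x|^\alpha}\,dx\right)^{2/2_{s,\alpha}}.
\]
This avoids the extra Young factor $(1+\eta_1)$ in your first line. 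The real payoff is at the recombination step: both pieces are estimated by commutator bounds that keep the weight $\eta_i(y)$ inside the Gagliardo integrand, namely $(1+\ve)\int_{\Omega\times\Omega}\eta_i(y)\tfrac{|u(x)-u(y)|^2}{|x-y|^{n+2s}}+C\|u\|_{L^2}^2$, and since $\eta_1(y)+\eta_2(y)=1$ they sum exactly to $(1+\ve)[u]_{s,\Omega}^2$. In your route, you drop the localization ($\chi(x)^2\le 1$) in the near piece, so that piece alone already costs $(1+\eta_1)(1+\eta_2)[u]_{s,\Omega}^2$, and you must prevent the far piece from contributing \emph{any} uncontrolled multiple of $[u]_{s,\Omega}^2$. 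You achieve that by a different idea: since $0<\alpha<2s$ gives $2_{s,\alpha}<2^*_s$, the embedding $H^s(\Omega)\hookrightarrow L^{2_{s,\alpha}}(\Omega)$ is compact, and Ehrling's lemma lets you absorb $\|u\|_{L^{2_{s,\alpha}}}^2$ into $\tilde\ve[u]_{s,\Omega}^2+C(\tilde\ve)\|u\|_{L^2}^2$. The paper treats this far-from-origin piece without Ehrling: it H\"older-interpolates to $L^{2^*_s}$, introduces the auxiliary Sobolev constant $\kappa_{\Omega_1}$ for functions vanishing on an inner set $\Omega_1$, and then shrinks $|\Omega\setminus\Omega_1|$ so that $\mu_\alpha d_1^{-2\alpha/2_{s,\alpha}}|\Omega\setminus\Omega_1|^{2\alpha/(n2_{s,\alpha})}\kappa_{\Omega_1}^{-1}\le 1$, before running a second commutator estimate to land back on the weighted Gagliardo term. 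Your Ehrling-based treatment is shorter and more transparent; note that it relies on the same compact embedding the paper already uses in Proposition~\ref{key-prop-theo-1} via \cite[Theorem 4.54]{Demengel-Demengel}, so it imposes no extra hypotheses, and both arguments need $\alpha>0$ for the same reason (either $2_{s,\alpha}<2^*_s$, or the exponent $2\alpha/(n2_{s,\alpha})$ must be positive for the shrinking argument to work). Your closing remark about the sharp $(1+\eta_2)$ coefficient is exactly right and is the analogue of why the paper insists on a \emph{quadratic} partition rather than an arbitrary cutoff.
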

\begin{proof}
Let $ \Omega_1 \subset \Omega_2 \subset \Omega$ be bounded sets  to be determined, such that $0\in \Omega_1$. Let $\phi \in C_c^{\infty}(\Omega)$ be such that $0\le \phi \le 1$ in $\Omega$, $\phi=1$ in $\Omega_1$, $\phi=0$ in $\Omega \setminus \Omega_2$. 
Consider
$$
\eta_1=\frac{\phi^2}{\phi^2 +(1-\phi)^2}, \quad \eta_2=\frac{(1-\phi)^2}{\phi^2+(1-\phi)^2}.
$$ 
Then, $\eta_1^{\frac{1}{2}} \in C_c^1(\Omega), \eta_2^{\frac{1}{2}} \in C^1(\Omega)$,  $\eta_1+\eta_2=1$, $\supp \eta_1 \subset \Omega_2 \subset \Omega, \, \, \supp \eta_2 \subset \R^n \setminus \Omega_1$. 
Let $u\in  H^s(\Omega)$. We consider $\eta_2^{\frac{1}{2}} u \colon \Omega \to \R$, by \cite[Lemma 5.3]{DiNezza-Palatucci-Valdinoci}, $\eta_2^{\frac{1}{2}} u\in H^s(\Omega)$, since $u\in H^s(\Omega)$ and $\eta_2^{\frac{1}{2}} \in C^{0,1}(\Omega)$. Moreover, $\|\eta_{2}^{\frac{1}{2}}u\|_{H^s(\Omega)} \le C(n,s,\Omega)\|u\|_{H^s(\Omega)}$.
By using the auxiliary functions $\eta_1,\eta_2$, we can split the main integral into two pieces and analyze them separately, as follows,
\begin{align*}
\mu_{\alpha}\left( \int_{\Omega} \frac{|u(x)|^{2_{s,\alpha}}}{|x|^{\alpha}}\, dx \right)^{\frac{2}{2_{s,\alpha}}}&\le  \mu_{\alpha}  \left(\sum_{i=1}^2  \left( \int_{\Omega} \frac{|\eta_i^{\frac{1}{2}} u|^{2_{s,\alpha}}}{|x|^{\alpha}}  \, dx \right)^{\frac{2}{2_{s,\alpha}}} \right) =: I_1+I_2.
\end{align*}
%\begin{align*}
%\mu_{\alpha}\left( \int_{\Omega} \frac{|u(x)|^{2_{s,\alpha}}}{|x|^{\alpha}}\, dx \right)^{\frac{2}{2_{s,\alpha}}}&\le  \mu_{\alpha} \left(\sum_{i=1}^2 \Big{\|}  \eta_i u^2\Big{\|}_{L^{\frac{2_{s,\alpha}}{2}}\left(\Omega, |x|^{-\alpha}dx\right)} \right) \\
%&= \mu_{\alpha}  \left(\sum_{i=1}^2  \left( \int_{\Omega} \frac{|\eta_i^{\frac{1}{2}} u|^{2_{s,\alpha}}}{|x|^{\alpha}}  \, dx \right)^{\frac{2}{2_{s,\alpha}}} \right) \\
%&=: I_1+I_2.
%\end{align*}
%\begin{align*}
%\mu_{\alpha}\left( \int_{\Omega} \frac{|u(x)|^{2_{s,\alpha}}}{|x|^{\alpha}}\, dx \right)^{\frac{2}{2_{s,\alpha}}}&= \mu_{\alpha} \|u^2\|_{L^{\frac{2_{s,\alpha}}{2}}\left(\Omega, |x|^{-\alpha}dx\right)} \\
%&=  \mu_{\alpha} \Big{\|} \left(\sum_{i=1}^2 \eta_i\right)u^2\Big{\|}_{L^{\frac{2_{s,\alpha}}{2}}\left(\Omega, |x|^{-\alpha}dx\right)} \\
%&\le  \mu_{\alpha} \left(\sum_{i=1}^2 \Big{\|}  \eta_i u^2\Big{\|}_{L^{\frac{2_{s,\alpha}}{2}}\left(\Omega, |x|^{-\alpha}dx\right)} \right) \\
%&= \mu_{\alpha}  \left(\sum_{i=1}^2  \left( \int_{\Omega} \frac{|\eta_i^{\frac{1}{2}} u|^{2_{s,\alpha}}}{|x|^{\alpha}}  \, dx \right)^{\frac{2}{2_{s,\alpha}}} \right) \\
%&=: I_1+I_2.
%\end{align*}
To estimate $I_1$, notice that we can use the fractional Hardy-Sobolev inequality given by $\mu_{\alpha}$ for $\eta_1^{\frac{1}{2}}u$, see \eqref{global-mu}. Thus, 
\begin{equation}\label{I-1}
I_1= \mu_{\alpha}  \left( \int_{\Omega} \frac{|\eta_1^{\frac{1}{2}} u|^{2_{s,\alpha}}}{|x|^{\alpha}}  \, dx \right)^{\frac{2}{2_{s,\alpha}}}=\mu_{\alpha}  \left( \int_{\R^n} \frac{|\eta_1^{\frac{1}{2}} u|^{2_{s,\alpha}}}{|x|^{\alpha}}  \, dx \right)^{\frac{2}{2_{s,\alpha}}} \le [\eta_1^{\frac{1}{2}}u]_s^2\\
\end{equation}
Notice that $\supp \eta_1 \subset \Omega$. Similarly to \eqref{casi-casi}, we obtain

\begin{align*}
[\eta_1^{\frac{1}{2}}u]_s
&\le \left(\int_{\Omega\times \Omega} \frac{|\eta_1^{\frac{1}{2}}(x)u(x)-\eta_1^{\frac{1}{2}}(y)u(y)|^2}{|x-y|^{n+2s}}\, dxdy + 2\int_{(\R^n \setminus \Omega)\times  \Omega} \frac{\eta_1(x)|u(x)|^2}{|x-y|^{n+2s}}\, dxdy\right)^{\frac{1}{2}}\\
\end{align*}
%\begin{align*}
%[\eta_1^{\frac{1}{2}}u]_s&= \Bigg(\int_{\supp \eta_1\times \supp \eta_1} \frac{|\eta_1^{\frac{1}{2}}(x)u(x)-\eta_1^{\frac{1}{2}}(y)u(y)|^2}{|x-y|^{n+2s}}\, dxdy\\
%&+2\int_{(\R^n \setminus \supp \eta_1)\times  \supp \eta_1} \frac{\eta_1(x)|u(x)|^2}{|x-y|^{n+2s}}\, dxdy\Bigg)^{\frac{1}{2}}\\
%&\le \left(\int_{\supp \eta_1\times \supp \eta_1} \frac{|\eta_1^{\frac{1}{2}}(x)u(x)-\eta_1^{\frac{1}{2}}(y)u(y)|^2}{|x-y|^{n+2s}}\, dxdy\right)^{\frac{1}{2}}\\
%&+\left(2\int_{(\R^n \setminus \supp \eta_1)\times  \supp \eta_1} \frac{\eta_1(x)|u(x)|^2}{|x-y|^{n+2s}}\, dxdy\right)^{\frac{1}{2}}\\
%\end{align*}
For the first term, we use \eqref{add-subtract} for $\eta_1^{\frac{1}{2}}u$ and Minkowski's inequality. 
For the second term, we proceed similar to Lemma \ref{cut-off-lemma} \eqref{cut-off-bound}, to get
\begin{align*}
[\eta_1^{\frac{1}{2}}u]_s&\le  \left(\int_{\Omega\times \Omega} \frac{\eta_1(y)|u(x)-u(y)|^2}{|x-y|^{n+2s}}\, dxdy\right)^{\frac{1}{2}} +C(\phi, n, s)\left( \int_{\Omega}|u|^2\, dx \right)^{\frac{1}{2}}
\end{align*}
which implies, by using $(a+b)^2 \le (1+\ve)a^2 +(1+\ve^{-1})b^2$ for every $\ve>0$, 
\begin{equation}\label{equ-eta_1-u}
[\eta_1^{\frac{1}{2}}u]_{s}^2\le (1+\ve)\int_{\Omega\times\Omega} \frac{\eta_1(y)|u(x)-u(y)|^2}{|x-y|^{n+2s}}\, dxdy+ C(\phi, n, s, \ve)\int_{\Omega}|u|^2\, dx.
\end{equation}

%Now, since $\eta_1^{\frac{1}{2}}u$ belongs to $H^s(\R^n)$, it can be used the fractional Hardy-Sobolev constant $\mu_{\alpha}$, see \cite{Marano-Mosconi}, as follows, 
%\begin{align*}
%I_1&= \mu_{\alpha}  \left( \int_{\Omega} \frac{|\eta_1^{\frac{1}{2}} u|^{2_{s,\alpha}}}{|x|^{\alpha}}  \, dx \right)^{\frac{2}{2_{s,\alpha}}}=\mu_{\alpha}  \left( \int_{\R^n} \frac{|\eta_1^{\frac{1}{2}} u|^{2_{s,\alpha}}}{|x|^{\alpha}}  \, dx \right)^{\frac{2}{2_{s,\alpha}}} \le [\eta_1^{\frac{1}{2}}u]_s^2\\
%\end{align*}
Therefore, taking into account \eqref{I-1}-\eqref{equ-eta_1-u}, we obtain
\begin{equation}\label{I_1}
I_1\le  (1+\ve)\int_{\Omega\times\Omega} \frac{\eta_1(y)|u(x)-u(y)|^2}{|x-y|^{n+2s}}\, dxdy +C(\phi, n, s, \ve)\int_{\Omega}|u(x)|^2 \, dx
\end{equation}
To analyze $I_2$, notice that $\eta_2=0$ in $\Omega_1$, so that
\begin{align*}
I_2&= \mu_{\alpha}  \left( \int_{\Omega} \frac{|\eta_2^{\frac{1}{2}} u|^{2_{s,\alpha}}}{|x|^{\alpha}}  \, dx \right)^{\frac{2}{2_{s,\alpha}}} = \mu_{\alpha}  \left( \int_{\Omega \setminus \Omega_1} \frac{|\eta_2^{\frac{1}{2}} u|^{2_{s,\alpha}}}{|x|^{\alpha}}  \, dx \right)^{\frac{2}{2_{s,\alpha}}}.
\end{align*}
Observe that $0\notin \supp \eta_2$. Denote by $d_1:=\dist(0,\partial \Omega_1)$. Thus, by H\"older's inequality with $p=\frac{n}{n-\alpha}, p'= \frac{n}{\alpha}$,
\begin{align*}
I_2&\le \mu_{\alpha}{d_1^{-\frac{2\alpha}{2_{s,\alpha}}}}  \left( \int_{\Omega \setminus \Omega_1}|\eta_2^{\frac{1}{2}} u|^{2_{s,\alpha}}  \, dx \right)^{\frac{2}{2_{s,\alpha}}} \le  \mu_{\alpha}{d_1^{-\frac{2\alpha}{2_{s,\alpha}}}}  \left( |\Omega\setminus \Omega_1|^{\frac{\alpha}{n}}\left(\int_{\Omega \setminus \Omega_1}|\eta_2^{\frac{1}{2}} u|^{2_{s}^*}  \, dx \right)^{\frac{n-\alpha}{n}} \right)^{\frac{2}{2_{s,\alpha}}}\\
&\le \mu_{\alpha}{d_1^{-\frac{2\alpha}{2_{s,\alpha}^*}}} |\Omega\setminus \Omega_1|^{\frac{2\alpha}{n 2_{s,\alpha}}} \left( \int_{\Omega \setminus \Omega_1}|\eta_2^{\frac{1}{2}} u|^{2_{s}^*}  \, dx  \right)^{\frac{2}{2_{s}^*}} \le \mu_{\alpha}{d_1^{-\frac{2\alpha}{2_{s,\alpha}}}} |\Omega\setminus \Omega_1|^{\frac{2\alpha}{n 2_{s,\alpha}}} \kappa^{-1}_{\Omega_1}[\eta_2^{\frac{1}{2}} u]_{s,\Omega}^2,
\end{align*}
where $\kappa_{\Omega_1}$ is given by
$$
\kappa_{\Omega_1}:= \inf\left\{ [ v]^2_{s,\Omega} \,  \colon v\in H^s(\Omega),\, v=0 \text{ in } \Omega_1, \, \int_{\Omega} |v|^{2_{s}^*} \, dx =1 \right\}.
%\kappa_{\Omega_1}:= \inf\left\{ [ v]^2_{s,\Omega} \,  \colon v\in H^s(\Omega \setminus \Omega_1),\, v=0 \text{ in } \Omega_1, \, \int_{\Omega \setminus \Omega_1} v^{2_{s}^*} \, dx =1 \right\}.
$$
It will be enough to prove that 
\begin{equation}\label{choose-delta}
\mu_{\alpha}{d_1^{-\frac{2\alpha}{2_{s,\alpha}}}} |\Omega\setminus \Omega_1|^{\frac{2\alpha}{n 2_{s,\alpha}}} \kappa^{-1}_{\Omega_1} \le 1.
\end{equation}
Indeed, given $\delta>0$, choose $\Omega_1\subset \Omega$ such that $0\in \Omega_1$ and $|\Omega \setminus \Omega_1|<\delta$. Let $\Omega_0\subset \Omega$ be an open bounded set such that $0\in \Omega_0 \subset \Omega_1$. Then, $d_1\ge d_0:=\dist(0,\partial \Omega_0)$. Moreover, $\kappa_{\Omega_0}\le \kappa_{\Omega_1}$. %  since
%\begin{align*}
%&\left\{ v\in H^s(\Omega \setminus \Omega_1) \colon v=0 \text{ in } \Omega_1, \int_{\Omega \setminus \Omega_1} |v|^{2_{s}^*} \, dx =1  \right\} \\
%&\subset \left\{ w\in H^s(\Omega \setminus \Omega_0) \colon w=0 \text{ in } \Omega_0, \int_{\Omega \setminus \Omega_0} |w|^{2_{s}^*} \, dx =1  \right\} 
%\end{align*}
%with the identification 
%$$
%v \in H^s(\Omega\setminus \Omega_1)\mapsto w=\begin{cases} v & \text{ in } \Omega\setminus \Omega_1,\\
%0 &\text{ in } \Omega_1\setminus \Omega_0.
%\end{cases}
%$$
Therefore,
\begin{align*}
\mu_{\alpha}{d_1^{-\frac{2\alpha}{2_{s,\alpha}}}} |\Omega\setminus \Omega_1|^{\frac{2\alpha}{n 2_{s,\alpha}}} \kappa^{-1}_{\Omega_1}&\le \mu_{\alpha}{d_0^{-\frac{2\alpha}{2_{s,\alpha}}}} |\Omega\setminus \Omega_1|^{\frac{2\alpha}{n 2_{s,\alpha}}} \kappa^{-1}_{\Omega_0} \\
&\le C\left(\Omega_0\right)|\Omega \setminus \Omega_1|^{\frac{2\alpha}{n 2_{s,\alpha}}}\le C\left(\Omega_0\right)\delta^{\frac{2\alpha}{n 2_{s,\alpha}}}.
\end{align*}
Let $\delta>0$ be such that $C(\Omega_0)\delta^{\frac{2\alpha}{n 2_{s,\alpha}}}<1$. Consequently, proceeding similar to the estimate of $[\eta_1^{\frac{1}{2}}u]_s$,  we obtain 
\begin{equation}
\label{I_2}I_2\le [\eta_2^{\frac{1}{2}}u]^2_{s,\Omega}\le (1+\ve)\int_{\Omega\times\Omega}\frac{\eta_2(y)|u(x)-u(y)|^2}{|x-y|^{n+2s}}\, dxdy +C(\phi, n, s, \ve)\int_{\Omega}|u(x)|^{2}\, dx.
\end{equation}
By \eqref{I_1},\eqref{I_2} and the fact that $\eta_1+\eta_2=1$, we conclude \eqref{auxiliary-constant}, where the constant only depends on $\Omega_0,\phi, n, s$ and $\ve$, then $C(\Omega,n,s, \ve)=C(\ve)$.
\end{proof}

Combining Lemmas \ref{basic-lemma} and \ref{mu-positive}, we get the next proposition which gives (non)existence of an extremal function for $\mu_{\alpha, \lambda}(\Omega)$, depending on the relation with the global constant in $\R^n$, i.e. $\mu_{\alpha}$.

\begin{prop}\label{key-prop-theo-1} Let $\lambda>0$ and $\Omega \subset \R^n$ be a bounded domain such that $0\in \Omega$.
\begin{itemize}
\item[(1)] If $\mu_{\alpha,\lambda}(\Omega)< \mu_{\alpha}$, then $\mu_{\alpha,\lambda}(\Omega)$ is attained. 
\item[(2)] If there exists a $\bar{\lambda}>0$ such that $\mu_{\alpha,\bar{\lambda}}(\Omega)=\mu_{\alpha}$, then for every $\lambda>\bar{\lambda}$, $\mu_{\alpha,\lambda}(\Omega)$ is not attained.
\end{itemize}
\end{prop}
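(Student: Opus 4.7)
The plan is to prove $(1)$ by a Brezis--Lieb concentration-compactness argument built on the auxiliary inequality of Lemma~\ref{mu-positive}, and to prove $(2)$ by exploiting the strict monotonicity in $\lambda$ of the functional once a minimizer exists.

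For $(1)$, I would fix a minimizing sequence $\{u_k\}\subset H^s(\Omega)$ with $\|u_k\|_{s,\alpha,\Omega}=1$ and $[u_k]_{s,\Omega}^2+\lambda\int_\Omega|u_k|^2\,dx\to\mu_{\alpha,\lambda}(\Omega)$. Since $\lambda>0$, the sequence is bounded in $H^s(\Omega)$; after extraction, $u_k\rightharpoonup u$ weakly in $H^s(\Omega)$, $u_k\to u$ strongly in $L^2(\Omega)$ by the compact Rellich embedding, and $u_k\to u$ pointwise a.e. Applying the Brezis--Lieb lemma, first to $(u_k(x)-u_k(y))|x-y|^{-(n+2s)/2}$ in $L^2(\Omega\times\Omega)$ and then to $|u_k|^{2_{s,\alpha}}|x|^{-\alpha}$ in $L^1(\Omega)$, one obtains
\begin{align*}
[u_k]_{s,\Omega}^2 &= [u]_{s,\Omega}^2+[u_k-u]_{s,\Omega}^2+o(1),\\
\|u_k\|_{s,\alpha,\Omega}^{2_{s,\alpha}} &= \|u\|_{s,\alpha,\Omega}^{2_{s,\alpha}}+\|u_k-u\|_{s,\alpha,\Omega}^{2_{s,\alpha}}+o(1).
\end{align*}
Setting $a:=\|u\|_{s,\alpha,\Omega}^{2_{s,\alpha}}$ and $b:=\lim_k\|u_k-u\|_{s,\alpha,\Omega}^{2_{s,\alpha}}$, one has $a+b=1$ with $a,b\in[0,1]$.

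Next I would apply Lemma~\ref{mu-positive} to $u_k-u$ and use $\|u_k-u\|_{L^2(\Omega)}\to 0$ to get, for every $\ve>0$, $\frac{\mu_\alpha}{1+\ve}\,b^{2/2_{s,\alpha}}\le\liminf_k[u_k-u]_{s,\Omega}^2$. The definition of $\mu_{\alpha,\lambda}(\Omega)$ applied to $u/\|u\|_{s,\alpha,\Omega}$ (or trivially if $a=0$) yields $\mu_{\alpha,\lambda}(\Omega)\,a^{2/2_{s,\alpha}}\le[u]_{s,\Omega}^2+\lambda\int_\Omega|u|^2\,dx$. Summing and inserting the splittings gives
$$
\mu_{\alpha,\lambda}(\Omega)\ge \mu_{\alpha,\lambda}(\Omega)\,a^{2/2_{s,\alpha}}+\tfrac{\mu_\alpha}{1+\ve}\,b^{2/2_{s,\alpha}}.
$$
By hypothesis $\mu_{\alpha,\lambda}(\Omega)<\mu_\alpha$, so $\ve>0$ can be chosen so small that $\mu_\alpha/(1+\ve)\ge\mu_{\alpha,\lambda}(\Omega)$, forcing $a^{2/2_{s,\alpha}}+b^{2/2_{s,\alpha}}\le 1$. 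Since $2/2_{s,\alpha}<1$ and $a+b=1$, the subadditivity of $t\mapsto t^{2/2_{s,\alpha}}$ gives $a^{2/2_{s,\alpha}}+b^{2/2_{s,\alpha}}\ge 1$, with strict inequality when $a,b\in(0,1)$. The case $a=0$, $b=1$ is excluded because it would yield $\mu_{\alpha,\lambda}(\Omega)\ge\mu_\alpha$. Thus $a=1$, $b=0$, and $u$ satisfies $\|u\|_{s,\alpha,\Omega}=1$ together with $[u]_{s,\Omega}^2+\lambda\int_\Omega|u|^2\,dx\le\liminf_k\bigl([u_k]_{s,\Omega}^2+\lambda\|u_k\|_{L^2(\Omega)}^2\bigr)=\mu_{\alpha,\lambda}(\Omega)$, so $u$ is an extremal.

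For $(2)$, Lemma~\ref{basic-lemma} gives that $\lambda\mapsto\mu_{\alpha,\lambda}(\Omega)$ is nondecreasing and bounded above by $\mu_\alpha$, so $\mu_{\alpha,\bar\lambda}(\Omega)=\mu_\alpha$ forces $\mu_{\alpha,\lambda}(\Omega)=\mu_\alpha$ for every $\lambda\ge\bar\lambda$. If some $\lambda>\bar\lambda$ admitted a minimizer $u$, then $\|u\|_{s,\alpha,\Omega}=1$ would force $u\not\equiv 0$, hence $\int_\Omega|u|^2\,dx>0$, and then
$$
\mu_\alpha=[u]_{s,\Omega}^2+\lambda\int_\Omega|u|^2\,dx>[u]_{s,\Omega}^2+\bar\lambda\int_\Omega|u|^2\,dx\ge\mu_{\alpha,\bar\lambda}(\Omega)=\mu_\alpha,
$$
a contradiction. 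The main technical hurdle is the Brezis--Lieb splitting for the Gagliardo seminorm on $\Omega\times\Omega$ (verifying the a.e.\ convergence of the difference-quotient integrand and applying the lemma in $L^2(\Omega\times\Omega,|x-y|^{-n-2s}dxdy)$), and the delicate use of Lemma~\ref{mu-positive} applied to the deficit $u_k-u$ to rule out the concentration scenario $a=0$, which is precisely where the strict hypothesis $\mu_{\alpha,\lambda}(\Omega)<\mu_\alpha$ enters the argument.
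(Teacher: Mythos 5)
Your proof is correct and follows essentially the same route as the paper: a Brezis--Lieb decomposition of both the Gagliardo seminorm and the constraint, combined with Lemma~\ref{mu-positive} applied to a quantity tending to zero in $L^2(\Omega)$, and the strict inequality $\mu_{\alpha,\lambda}(\Omega)<\mu_\alpha$ to rule out concentration; part (2) is identical. The only organizational difference is that the paper first rules out $u\equiv 0$ by contradiction (applying Lemma~\ref{mu-positive} to $u_k$ under that hypothesis) and then closes the Brezis--Lieb chain using the definition of $\mu_{\alpha,\lambda}(\Omega)$ on $u_k-u$, whereas you apply Lemma~\ref{mu-positive} directly to the deficit $u_k-u$ and obtain the dichotomy $a=0$ or $b=0$ in one stroke, which is a clean and slightly more unified way to package the same argument; you should just note explicitly that $\mu_{\alpha,\lambda}(\Omega)>0$ (an immediate consequence of Lemma~\ref{mu-positive}) before dividing by it.
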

\begin{proof}
(i) Let $\{ u_k\}_{k\in \N}\subset H^s(\Omega)$ be a minimizing sequence for $\mu_{\alpha,\lambda}(\Omega)$, that is, 
$$
\int_{\Omega}\frac{|u_k|^{2_{s,\alpha}}}{|x|^{\alpha}}\, dx =1 \, \text{ for every } k\in \N, \text{ and } \lim_{k\to\infty}\left([u_k]_{s,\Omega}^2+\lambda\int_{\Omega}|u_k|^2\, dx \right)= \mu_{\alpha, \lambda}(\Omega). 
$$
Then, $\{ u_k\}_{k\in \N}$ is bounded in $H^s(\Omega)$. Therefore, up to a subsequence, we can assume that 
\begin{itemize}
	\item[] $u_k\cd u$ weakly in $H^s(\Omega)$, 
	\item[] $u_k\to u$ strongly in $L^p(\Omega)$ for $1\le p<2_s^*=\frac{2n}{n-2s}$, see \cite[Theorem 4.54]{Demengel-Demengel}, 
%	\item[] $u_k\to u$ strongly in $L^q(\Omega,|x|^{-\alpha}dx)$ for $1\le q<2_{s,\alpha}$, 
	\item[] $u_k\to u$ a.e. in $\Omega$
\end{itemize}

%Remark: $u_k\to u$ strongly in $L^q(\Omega,|x|^{-\alpha}dx)$ for $1\le q<2_{s,\alpha}$. Indeed, since $\Omega$ is bounded, there exists $R>0$ such that $\Omega \subset B_R(0)$, and, for $1\le q<2_{s,\alpha}$, by Lemma \ref{mu-positive},
%\begin{align*}
%\int_{\Omega}\frac{|v|^q}{|x|^{\alpha}}\, dx &= \int_{\Omega \cap \{ |v|< 1\}}\frac{|v|^q}{|x|^{\alpha}}\, dx+\int_{\Omega \cap \{ |v|\ge 1\}}\frac{|v|^q}{|x|^{\alpha}}\, dx\\
%&\le \int_{\Omega }\frac{1}{|x|^{\alpha}}\, dx+\int_{\Omega }\frac{|v|^{2_{s,\alpha}}}{|x|^{\alpha}}\, dx\\
%&\le \int_{B_R(0)}\frac{1}{|x|^{\alpha}}\, dx+ \mu_{\alpha}^{-1}\left( [v]^2_{s,\Omega}+C\|u\|_{L^2(\Omega)}\right)^{\frac{2_{s,\alpha}}{2}}\\
%&\le C(\Omega,n,\alpha)+ \mu_{\alpha}^{-1}\left( [v]^2_{s,\Omega}+C\|v\|_{L^2(\Omega)}^2\right)^{\frac{2_{s,\alpha}}{2}}\\
%\end{align*}

Let us see that $u\not \equiv 0$. We proceed by contradiction. Assume $u\equiv 0$ a.e. in $\Omega$ and let $\ve>0$. By \eqref{auxiliary-constant}, we get
\begin{align*}
\frac{\mu_{\alpha}}{1+\ve}&=\frac{\mu_{\alpha}}{1+\ve} \left( \int_{\Omega}\frac{|u_k|^{2_{s,\alpha}}}{|x|^{\alpha}}\, dx\right)^{\frac{2}{2_{s,\alpha}}} \le [u_k]_{s,\Omega}^2 +C(\ve)\int_{\Omega}|u_k|^2\, dx\\
\end{align*}
which implies
\begin{equation}\label{ojo}
\frac{\mu_{\alpha}}{1+\ve} \le \mu_{\alpha,\lambda}(\Omega)+o(1)+ (C(\ve)-\lambda)\int_{\Omega}|u_k|^2\, dx.
\end{equation}
By taking the limit in $k$, we get $\frac{\mu_{\alpha}}{1+\ve}\le \mu_{\alpha,\lambda}(\Omega)$ for every $\ve>0$. Thus, letting $\ve\to 0$, we obtain $\mu_{\alpha} \le \mu_{\alpha,\lambda}(\Omega)$ which is a contradiction. Therefore, $u \not \equiv 0$ in $\Omega$.
By Brezis-Lieb Theorem \cite{Brezis-Lieb}, we know that
$$
 \int_{\Omega}\frac{|u_k|^{2_{s,\alpha}}}{|x|^{\alpha}}\, dx= \int_{\Omega}\frac{|u|^{2_{s,\alpha}}}{|x|^{\alpha}}\, dx+ \int_{\Omega}\frac{|u_k-u|^{2_{s,\alpha}}}{|x|^{\alpha}}\, dx +o(1),
$$
from it follows that
\begin{align*}
1&= \left( \int_{\Omega}\frac{|u_k|^{2_{s,\alpha}}}{|x|^{\alpha}}\, dx\right)^{\frac{2}{2_{s,\alpha}}} = \left( \int_{\Omega}\frac{|u|^{2_{s,\alpha}}}{|x|^{\alpha}}\, dx+ \int_{\Omega}\frac{|u_k-u|^{2_{s,\alpha}}}{|x|^{\alpha}}\, dx +o(1)   \right)^{\frac{2}{2_{s,\alpha}}}\\
&\le \left( \int_{\Omega}\frac{|u|^{2_{s,\alpha}}}{|x|^{\alpha}}\, dx \right)^{\frac{2}{2_{s,\alpha}}}+\left( \int_{\Omega}\frac{|u_k-u|^{2_{s,\alpha}}}{|x|^{\alpha}}\, dx \right)^{\frac{2}{2_{s,\alpha}}}+o(1)\\
&\le \frac{1}{\mu_{\alpha,\lambda}(\Omega)}\left([u]^2_{s,\Omega}+\lambda\int_{\Omega}|u|^2\, dx\right)\\
&+ \frac{1}{\mu_{\alpha,\lambda}(\Omega)}\left([u_k-u]^2_{s,\Omega}+\lambda\int_{\Omega}|u_k-u|^2\, dx\right) +o(1)\\
&=\frac{1}{\mu_{\alpha,\lambda}(\Omega)}\left([u_k]^2_{s,\Omega}+\lambda\int_{\Omega}|u_k|^2\, dx\right) +o(1) \\
&=1+o(1).
\end{align*}
Notice that we have used that 
\begin{align*}
|(u_k-u)(x)-(u_k-u)(y)|^2&= |u_k(x)-u_k(y)|^2+|u(x)-u(y)|^2\\
&-2(u_k(x)-u_k(y))(u(x)-u(y)),
\end{align*}
implies that
\begin{align*}
[u]^2_{s,\Omega}+[u_k-u]^2_{s,\Omega}&\le [u_k]^2_{s,\Omega}+ 2[u]^2_{s,\Omega}- 2\int_{\Omega \times\Omega}\frac{(u_k(x)-u_k(y))(u(x)-u(y))}{|x-y|^{n+2s}}\, dxdy \\
&= [u_k]^2_{s,\Omega}+o(1),
\end{align*}	
due to the weakly convergence $u_k\cd u$ in $H^s(\Omega)$.
As a consequence, there exists the following limit
\begin{align*}
1&=\lim_{k\to \infty} \left( \int_{\Omega}\frac{|u|^{2_{s,\alpha}}}{|x|^{\alpha}}\, dx+ \int_{\Omega}\frac{|u_k-u|^{2_{s,\alpha}}}{|x|^{\alpha}}\, dx  \right)^{\frac{2}{2_{s,\alpha}}}\\
&=\lim_{k\to\infty}\left[ \left( \int_{\Omega}\frac{|u|^{2_{s,\alpha}}}{|x|^{\alpha}}\, dx \right)^{\frac{2}{2_{s,\alpha}}}+ \left(\int_{\Omega}\frac{|u_k-u|^{2_{s,\alpha}}}{|x|^{\alpha}}\, dx \right)^{\frac{2}{2_{s,\alpha}}} \right].
\end{align*}
Since $u\not \equiv 0$, we conclude that $u_k\to u$ strongly in $L^{2_{s,\alpha}}(\Omega, |x|^{-\alpha}dx)$, and 
$$
\int_{\Omega}\frac{|u|^{2_{s,\alpha}}}{|x|^{\alpha}}\, dx=1,$$
which implies that $\mu_{\alpha,\lambda}(\Omega)$ is attained by $u$. 

\medskip

(ii) Let $\lambda>\bar{\lambda}$. Assume that there exists a function $u\in H^s(\Omega)$ which is a minimizer to $\mu_{\alpha,\lambda}(\Omega)$. Then, 
$$
\mu_{\alpha,\lambda}(\Omega)=[u]^2_{s,\Omega}+\lambda\int_{\Omega}|u|^2\, dx >[u]^2_{s,\Omega}+\bar{\lambda}\int_{\Omega}|u|^2\, dx \ge \mu_{\alpha,\bar{\lambda}}(\Omega)=\mu_{\alpha}\ge \mu_{\alpha,\lambda}(\Omega),
$$
where we have used (1) from Lemma \ref{basic-lemma} in the last inequality. This contradiction finishes the proof.  
\end{proof}

Now, we are in condition to prove Theorem \ref{main-theo}.
\begin{proof}[Proof of Theorem \ref{main-theo}] We define 
	$\lambda_* = \inf\{ \lambda>0 \colon \mu_{\alpha, \lambda}(\Omega) = \mu_{\alpha} \} \in (0, \infty]$. The proof follows from Corollary \ref{coro} and Proposition \ref{key-prop-theo-1}. 
\end{proof}

\section*{Acknowledgments}
The author wants to thank Prof. Marco Squassina %from Universit\'a Cattolica del Sacro Cuore (Brescia, Italy), 
for drawing her attention to this topic and for helpful discussions. This project has received funding from the European Union's Horizon 2020 research and innovation program under the Marie Skłodowska-Curie grant agreement No 777822.

%%%%%%%%%%%%%%%%%%%%%%%%%%%%
%%%%%%%%%%%%%%%%%%%%%%%%%%%%%
%% 
%% BIBLIOGRAFIA
%%
%%%%%%%%%%%%%%%%%%%%%%%%%%%%%
%%%%%%%%%%%%%%%%%%%%%%%%%%%%%
\bibliographystyle{amsplain}
\bibliography{biblio}

\providecommand{\bysame}{\leavevmode\hbox to3em{\hrulefill}\thinspace}
\providecommand{\MR}{\relax\ifhmode\unskip\space\fi MR }
% \MRhref is called by the amsart/book/proc definition of \MR.
\providecommand{\MRhref}[2]{%
  \href{http://www.ams.org/mathscinet-getitem?mr=#1}{#2}
}
\providecommand{\href}[2]{#2}
\begin{thebibliography}{10}

\bibitem{Bogdan-Dyda}
Krzysztof Bogdan and Bart\l~omiej Dyda, \emph{The best constant in a fractional
  {H}ardy inequality}, Math. Nachr. \textbf{284} (2011), no.~5-6, 629--638.
  \MR{2663757}

\bibitem{Brezis-Lieb}
Ha\"im Br\'ezis and Elliott Lieb, \emph{A relation between pointwise
  convergence of functions and convergence of functionals}, Proc. Amer. Math.
  Soc. \textbf{88} (1983), no.~3, 486--490. \MR{699419}

\bibitem{Caffarelli-Kohn-Nirenberg}
L.~Caffarelli, R.~Kohn, and L.~Nirenberg, \emph{First order interpolation
  inequalities with weights}, Compositio Math. \textbf{53} (1984), no.~3,
  259--275. \MR{768824}

\bibitem{Catrina-Wang}
Florin Catrina and Zhi-Qiang Wang, \emph{On the {C}affarelli-{K}ohn-{N}irenberg
  inequalities: sharp constants, existence (and nonexistence), and symmetry of
  extremal functions}, Comm. Pure Appl. Math. \textbf{54} (2001), no.~2,
  229--258. \MR{1794994}

\bibitem{Demengel-Demengel}
Fran\c{c}oise Demengel and Gilbert Demengel, \emph{Functional spaces for the
  theory of elliptic partial differential equations}, Universitext, Springer,
  London; EDP Sciences, Les Ulis, 2012, Translated from the 2007 French
  original by Reinie Ern\'e. \MR{2895178}

\bibitem{DiNezza-Palatucci-Valdinoci}
Eleonora Di~Nezza, Giampiero Palatucci, and Enrico Valdinoci,
  \emph{Hitchhiker's guide to the fractional {S}obolev spaces}, Bull. Sci.
  Math. \textbf{136} (2012), no.~5, 521--573. \MR{2944369}

\bibitem{Dyda}
Bart{\l}omiej Dyda, \emph{A fractional order {H}ardy inequality}, Illinois J.
  Math. \textbf{48} (2004), no.~2, 575--588. \MR{2085428}

\bibitem{Dyda-Lehrback-Vahakangas}
Bart{\l }omiej Dyda, Juha Lehrb\"ack, and Antti~V. V\"ah\"akangas,
  \emph{Fractional {H}ardy--{S}obolev type inequalities for half spaces and
  {J}ohn domains}, Proc. Amer. Math. Soc. \textbf{146} (2018), no.~8,
  3393--3402. \MR{3803664}

\bibitem{Edmunds-HurriSyrjanen-Vahakangas}
David~E. Edmunds, Ritva Hurri-Syrj\"anen, and Antti~V. V\"ah\"akangas,
  \emph{Fractional {H}ardy-type inequalities in domains with uniformly fat
  complement}, Proc. Amer. Math. Soc. \textbf{142} (2014), no.~3, 897--907.
  \MR{3148524}

\bibitem{Frank-Seiringer}
Rupert~L. Frank and Robert Seiringer, \emph{Non-linear ground state
  representations and sharp {H}ardy inequalities}, J. Funct. Anal. \textbf{255}
  (2008), no.~12, 3407--3430. \MR{2469027}

\bibitem{Frank-Seiringer-2}
\bysame, \emph{Sharp fractional {H}ardy inequalities in half-spaces}, Around
  the research of {V}ladimir {M}az'ya. {I}, Int. Math. Ser. (N. Y.), vol.~11,
  Springer, New York, 2010, pp.~161--167. \MR{2723817}

\bibitem{Ghoussoub-Kang}
N.~Ghoussoub and X.~S. Kang, \emph{Hardy-{S}obolev critical elliptic equations
  with boundary singularities}, Ann. Inst. H. Poincar\'e Anal. Non Lin\'eaire
  \textbf{21} (2004), no.~6, 767--793. \MR{2097030}

\bibitem{Ghoussoub-Robert}
N.~Ghoussoub and F.~Robert, \emph{Concentration estimates for {E}mden-{F}owler
  equations with boundary singularities and critical growth}, IMRP Int. Math.
  Res. Pap. (2006), 21867, 1--85. \MR{2210661}

\bibitem{Ghoussoub-Robert-Shakerian-Zhao}
Nassif Ghoussoub, Fr\'{e}d\'{e}ric Robert, Shaya Shakerian, and Mingfeng Zhao,
  \emph{Mass and asymptotics associated to fractional {H}ardy-{S}chr\"{o}dinger
  operators in critical regimes}, Comm. Partial Differential Equations
  \textbf{43} (2018), no.~6, 859--892. \MR{3909028}

\bibitem{Hashizume}
Masato Hashizume, \emph{Minimization problem on the {H}ardy-{S}obolev
  inequality}, NoDEA Nonlinear Differential Equations Appl. \textbf{24} (2017),
  no.~3, Art. 22, 12. \MR{3638305}

\bibitem{Herbst1977}
Ira~W. Herbst, \emph{Spectral theory of the operator
  {$(p^{2}+m^{2})^{1/2}-Ze^{2}/r$}}, Comm. Math. Phys. \textbf{53} (1977),
  no.~3, 285--294. \MR{436854}

\bibitem{Loss-Sloane}
Michael Loss and Craig Sloane, \emph{Hardy inequalities for fractional
  integrals on general domains}, J. Funct. Anal. \textbf{259} (2010), no.~6,
  1369--1379. \MR{2659764}

\bibitem{Marano-Mosconi}
Salvatore~A. Marano and Sunra J.~N. Mosconi, \emph{Asymptotics for optimizers
  of the fractional {H}ardy-{S}obolev inequality}, Commun. Contemp. Math.
  \textbf{21} (2019), no.~5, 1850028, 33. \MR{3980688}

\bibitem{Yang}
Jianfu Yang, \emph{Fractional {S}obolev-{H}ardy inequality in {$\Bbb{R}^N$}},
  Nonlinear Anal. \textbf{119} (2015), 179--185. \MR{3334182}

\end{thebibliography}

\end{document}